\documentclass[12pt,a4paper]{article}

\usepackage{amsmath,amsthm,amssymb,color}
\usepackage[authoryear]{natbib}
\usepackage{booktabs}

\usepackage{bbm}
\usepackage{mathrsfs}
\usepackage[breaklinks=true]{hyperref}
\usepackage{graphicx}
\usepackage{tikz}
\usetikzlibrary{arrows, automata}
\usetikzlibrary{calc}
\usetikzlibrary{positioning}

\numberwithin{equation}{section}
\allowdisplaybreaks[4]

\theoremstyle{plain}
\newtheorem{theorem}{Theorem}[section]
\newtheorem{proposition}{Proposition}[section]
\newtheorem{lemma}{Lemma}[section]
\newtheorem{corollary}{Corollary}[section]

\newtheorem{conjecture}{Conjecture}[section]

\theoremstyle{definition}
\newtheorem{definition}{Definition}[section]

\newtheorem{remark}{Remark}[section]

\makeatletter

\newcount\minute
\newcount\hour
\newcount\hourMins
\def\now{%
\minute=\time%
\hour=\time \divide \hour by 60%
\hourMins=\hour \multiply\hourMins by 60%
\advance\minute by -\hourMins%
\zeroPadTwo{\the\hour}:\zeroPadTwo{\the\minute}%
}
\def\zeroPadTwo#1{\ifnum #1<10 0\fi#1}

\renewcommand{\cite}{\citet}

\def\^#1{\ifmmode {\mathaccent"705E #1} \else {\accent94 #1} \fi}
\def\~#1{\ifmmode {\mathaccent"707E #1} \else {\accent"7E #1} \fi}

\def\*#1{#1^\ast}
\edef\-#1{\noexpand\ifmmode {\noexpand\bar{#1}} \noexpand\else \-#1\noexpand\fi}
\def\>#1{\vec{#1}}
\def\.#1{\dot{#1}}

\def\atop{\@@atop}
\def\*#1{\mathscr{#1}}

\renewcommand{\leq}{\leqslant}
\renewcommand{\geq}{\geqslant}

\newcommand{\eq}{\eqref}

\newcommand{\N}{\mathcal{N}}

\newcommand{\IE}{\mathbbm{E}}
\newcommand{\IP}{\mathbbm{P}}
\newcommand{\Var}{\mathop{\mathrm{Var}}\nolimits}

\def\be#1{\begin{equation*}#1\end{equation*}}
\def\ben#1{\begin{equation}#1\end{equation}}
\def\bes#1{\begin{equation*}\begin{split}#1\end{split}\end{equation*}}
\def\besn#1{\begin{equation}\begin{split}#1\end{split}\end{equation}}

\def\floor#1{{\lfloor#1\rfloor}}

\def\beqn#1\eeqn{\begin{align}#1\end{align}}
\def\beq#1\eeq{\begin{align*}#1\end{align*}}

\usepackage{graphicx}
\usepackage{latexsym}
\usepackage{amsmath,amsthm,amssymb,amscd}
\usepackage{epsf,amsmath}

\def\E{{\IE}}
\def\P{{\IP}}

\renewcommand\section{\@startsection {section}{1}{\z@}%
{-3.5ex \@plus -1ex \@minus -.2ex}%
{1.3ex \@plus.2ex}%
{\center\small\sc\mathversion{bold}\MakeUppercase}}

\def\subsection#1{\@startsection {subsection}{2}{0pt}%
{-3.5ex \@plus -1ex \@minus -.2ex}%
{1ex \@plus.2ex}%
{\bf\mathversion{bold}}{#1}}

\def\subsubsection#1{\@startsection{subsubsection}{3}{0pt}%
{\medskipamount}%
{-10pt}%
{\normalsize\itshape}{\kern-2.2ex. #1.}}

\def\blfootnote{\xdef\@thefnmark{}\@footnotetext}

\makeatother

\begin{document}

\title{Wasserstein-2 bounds in normal approximation under local dependence}
\author{Xiao Fang}
\date{\it The Chinese University of Hong Kong} 
\maketitle

\noindent{\bf Abstract:} 
We obtain a general bound for the Wasserstein-2 distance in normal approximation for sums of locally dependent random variables.
The proof is based on an asymptotic expansion for expectations of second-order differentiable functions of the sum.
We apply the main result to obtain Wasserstein-2 bounds in normal approximation for sums of $m$-dependent random variables, U-statistics and subgraph counts in the Erd\H{o}s-R\'enyi random graph.
We state a conjecture on Wasserstein-$p$ bounds for any positive integer $p$ and provide supporting arguments for the conjecture.

\medskip

\noindent{\bf AMS 2010 subject classification:} 60F05

\noindent{\bf Keywords and phrases:} central limit theorem, local dependence, Erd\H{o}s-R\'enyi random graph, Stein's method, U-statistics, Wasserstein-2 distance.


\section{Introduction}

For two probability measures $\mu$ and $\nu$ on $\mathbb{R}^d$, the so-called Wasserstein-$p$ distance, $p\geq 1$, is defined as
\be{
\mathcal{W}_p(\mu, \nu)=\Big( \inf_{\pi\in \Gamma(\mu,\nu)} \int |x-y|^p d\pi(x,y)  \Big)^{\frac{1}{p}},
}
where $\Gamma(\mu,\nu)$ is the space of all probability measures on $\mathbb{R}^d\times \mathbb{R}^d$ with $\mu$ and $\nu$ as marginals and $|\cdot|$ denotes the Euclidean norm.
Note that $\mathcal{W}_p(\mu, \nu)\leq \mathcal{W}_q(\mu, \nu)$ if $p\leq q$.
For a random vector $W$ whose distribution is close to $\nu$, it is of interest to provide an explicit upper bound on their Wasserstein-$p$ distance.
See, for example, \cite{LeNoPe15}, \cite{Bob18}, \cite{Zh18}, \cite{Bon18} and \cite{CoFaPa18} for a recent wave of research in this direction.

We consider the central limit theorem in dimension one where $\mu$ is the distribution of a random variable $W$ of interest, 
$\nu=N(0,1)$ and $d=1$ in the above setting.
A large class of random variables that can be approximated by a normal distribution exhibits a \emph{local dependence} structure. 
Roughly speaking, with details deferred to Section 2.1, we assume that the random variable $W$ is a sum of a large number of random variables $\{X_i: i\in I\}$ and that each $X_i$ is independent of $\{X_j: j\notin A_i\}$ for a relatively small index set $A_i$. 
\cite{BaKaRu89} obtained a Wasserstein-1 bound in the central limit theorem for such $W$ and \cite{ChSh04} obtained a bound for the Kolmogorov distance. We refer to these two papers for a number of interesting applications.

To prove their Wasserstein-1 bound, \cite{BaKaRu89} used Stein's method and the following equivalent definition of the Wasserstein-1 distance:
\be{
\mathcal{W}_1(\mu,\nu)=\sup_{h\in \text{Lip}_1(\mathbb{R})} \Big| \int_\mathbb{R} h d\mu-\int_\mathbb{R} h d\nu\Big|,
}
where Lip$_1(\mathbb{R})$ denotes the class of Lipschitz functions with Lipschitz constant 1.
There seems to be no such expression for $\mathcal{W}_p$ for general $p$.
The optimal Wasserstein-$p$ bound in normal approximation for sums of independent random variables (cf. Lemma \ref{l3}) was only recently obtained by \cite{Bob18} using characteristic functions.
Our main result, Theorem \ref{t1}, provides a Wasserstein-2 bound in normal approximation under local dependence,
which is a generalization of independence.
We also state a conjecture on Wasserstein-$p$ bounds for any positive integer $p$.

To prove our main result,
we follow the approach of \cite{Ri09}, who used the asymptotic expansion of \cite{Ba86} and a Poisson-like approximation to obtain a Wasserstein-2 bound in normal approximation for sums of independent random variables.
We first use Stein's method to obtain an asymptotic expansion for expectations of second-order differentiable functions of the sum of locally dependent random variables $W$. We then use this expansion and the upper bound for the Wasserstein-2 distance in terms of Zolotarev's ideal distance of order 2 to control the Wasserstein-2 distance between the distributions of $W$ and a sum of independent and identically distributed (i.i.d.) random variables. Finally, we use the triangle inequality and known Wasserstein-2 bounds in normal approximation for sums of i.i.d.\ random variables to prove our main result. 
This approach enables us to potentially bound the Wasserstein-$p$ distance for any positive integer $p$.

We apply our main result to the central limit theorem for sums of $m$-dependent random variables, U-statistics and subgraph counts in the Erd\H{o}s-R\'enyi random graph.

The paper is organized as follows. Section 2 contains the Wasserstein-2 bound in normal approximation under local dependence, the applications and the conjecture on Wasserstein-$p$ bounds.
Section 3 contains some related literature, the proofs of the results in Section 2 and supporting arguments for the conjecture.
In the following, we use $C$ to denote positive constants independent of all other parameters, possibly different from line to line.

\section{Main results}
In this section, we provide a general Wasserstein-2 bound in normal approximation under local dependence and  
apply it to the central limit theorem for sums of $m$-dependent random variables, U-statistics and subgraph counts in the Erd\H{o}s-R\'enyi random graph.
We also state a conjecture on Wasserstein-$p$ bounds.
\subsection{A Wasserstein-2 bound under local dependence}

Let $W=\sum_{i\in I} X_i$ for an index set $I$ with $\E X_i=0, \E W^2=1$ and satisfies the following \emph{local dependence} structure:
\begin{enumerate}
\item[(LD1):] For each $i\in I$, there exists $A_i\subset I$ such that $X_i$ is independent of $\{X_j: j\notin A_i\}$.
\item[(LD2):] For each $i\in I$ and $j\in A_i$, there exists $A_{ij}\supset A_i$ such that $\{X_i, X_j\}$ is independent of $\{X_k: k\notin A_{ij}\}$.
\item[(LD3):] For each $i\in I$, $j\in A_i$ and $k\in A_{ij}$, there exists $A_{ijk}\supset A_{ij}$ 
such that $\{X_i, X_j, X_k\}$ is independent of $\{X_l: l\notin A_{ijk}\}$.
\end{enumerate}
\begin{theorem}\label{t1}
Under the above setting, we have
\ben{\label{2}
\mathcal{W}_2(\mathcal{L}(W), N(0,1))\leq C\big[|\beta|+(\gamma_1+\gamma_2+\gamma_3)^{\frac{1}{2}} \big],
}
where
\be{
\beta=\sum_{i\in I} \sum_{j,k\in A_i} \E X_i X_j X_k +2\sum_{i\in I} \sum_{j\in A_i}\sum_{k\in A_{ij}\backslash A_i} \E X_i X_j X_k,
}
\be{
\gamma_1=\sum_{i\in I} \sum_{j\in A_i}\sum_{k\in A_{ij}}\sum_{l\in A_{ijk}} \E |X_i X_j X_k X_l|,
}
\be{
\gamma_2=\sum_{i\in I} \sum_{j\in A_i}\sum_{k\in A_{ij}}\sum_{l\in A_{ijk}} \E |X_i X_j| \E|X_k X_l|,
}
\be{
\gamma_3=\sum_{i\in I} \sum_{j\in A_i}\sum_{k\in A_{ij}}\sum_{l\in A_{ijk}} \E |X_i X_j X_k| \E |X_l|.
}
\end{theorem}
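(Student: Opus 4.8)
The plan is to follow the three-step scheme announced in the introduction: first compare $W$ to a carefully chosen sum of i.i.d.\ random variables $T$ in Zolotarev's ideal distance $\zeta_2$, then convert this into a Wasserstein-2 bound through the inequality $\mathcal{W}_2\le C\,\zeta_2^{1/2}$, and finally add, via the triangle inequality, the known Wasserstein-2 bound between $T$ and the normal law supplied by Lemma \ref{l3}. Before starting I would dispose of the trivial regime: taking the independent coupling of $W$ and $Z\sim N(0,1)$ gives $\mathcal{W}_2(\mathcal{L}(W),N(0,1))\le \sqrt{\E W^2+\E Z^2}=\sqrt 2$, so \eqref{2} holds automatically once $|\beta|$ exceeds a fixed constant, and I may assume $|\beta|$ is small.

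Under that assumption I would construct i.i.d.\ random variables $\xi_1,\dots,\xi_n$ with $\E\xi_a=0$, $\sum_a\E\xi_a^2=1$ and $\sum_a\E\xi_a^3=\beta$, and with controlled fourth moments, so that $T=\sum_a\xi_a$ matches the mean, variance, and third moment of $W$; this is feasible for $n$ large since $|\beta|$ is small. Matching the third moment is the whole point of the construction, because it is what forces the skewness corrections in the two expansions to cancel.

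The analytic core is the Stein expansion. For a test function $f$ with $\|f''\|_\infty\le 1$, I would solve the Stein equation and estimate $\E[f'(W)-Wf(W)]$. Writing $W=\sum_i X_i$ and, for each $i$, decomposing $W=W^{(A_i)}+\eta_i$ with $\eta_i=\sum_{j\in A_i}X_j$ and $X_i\indep W^{(A_i)}$, I would Taylor-expand $f(W)-f(W^{(A_i)})$ and iterate the same device two further times, each step enlarging the neighborhood from $A_i$ to $A_{ij}$ to $A_{ijk}$ in accordance with (LD1)--(LD3). Collecting terms, the first-order contributions cancel $\E f'(W)$ by the normalization $\sum_i\sum_{j\in A_i}\E X_iX_j=\E W^2=1$, the surviving higher contributions assemble into exactly the cubic functional encoded by the two pieces of $\beta$, and every remaining term is a product of at most four factors $X_i,X_j,X_k,X_l$ with indices ranging over the nested neighborhoods. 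Bounding these by absolute values, and by products of expectations wherever independence across blocks applies, yields precisely $\gamma_1,\gamma_2,\gamma_3$. Performing the identical expansion for the i.i.d.\ sum $T$ and subtracting, the $\E f'$ and the $\beta$ terms cancel by the moment matching, leaving $|\E f(W)-\E f(T)|\le C(\gamma_1+\gamma_2+\gamma_3)$, that is, $\zeta_2(\mathcal{L}(W),\mathcal{L}(T))\le C(\gamma_1+\gamma_2+\gamma_3)$.

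To conclude I would apply $\mathcal{W}_2(\mu,\nu)\le C\,\zeta_2(\mu,\nu)^{1/2}$, valid since $W$ and $T$ have uniformly bounded low-order moments, to obtain $\mathcal{W}_2(\mathcal{L}(W),\mathcal{L}(T))\le C(\gamma_1+\gamma_2+\gamma_3)^{1/2}$; then invoke Lemma \ref{l3}, whose leading skewness term for $T$ is of order $|\beta|$ with the remainder negligible against $(\gamma_1+\gamma_2+\gamma_3)^{1/2}$; and combine by the triangle inequality to reach \eqref{2}. The main obstacle is the Stein expansion itself: tracking, through three successive Taylor steps and three layers of neighborhoods, which triple products coalesce into $\beta$ and ensuring all other products are dominated by $\gamma_1+\gamma_2+\gamma_3$, while using only the two derivatives of $f$ available in the class defining $\zeta_2$, so that the remainders must be handled through the integral form of Taylor's theorem rather than a third derivative. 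A secondary technical point is checking that the comparison sum can be built with the prescribed first three moments and fourth moments small enough that Lemma \ref{l3} contributes only $|\beta|$ together with a term negligible against $(\gamma_1+\gamma_2+\gamma_3)^{1/2}$.
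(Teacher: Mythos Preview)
Your overall architecture matches the paper's: Stein expansion for $W$, a moment-matched i.i.d.\ comparison sum, Rio's inequality $\mathcal{W}_2\le C\,Z_2^{1/2}$, and Bobkov's bound for the comparison sum. But there is a genuine gap at the point where you claim ``the $\beta$ terms cancel by the moment matching, leaving $|\E f(W)-\E f(T)|\le C(\gamma_1+\gamma_2+\gamma_3)$.''

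The Stein expansion does not produce $\beta$ times a constant depending only on the test function; it produces $-\tfrac{\beta}{2}\,\E f''(W)+O(\gamma_1+\gamma_2+\gamma_3)$, where $f=f_h$ is the Stein solution for $h\in\Lambda_2$ (this is exactly the paper's equation \eqref{25}). The analogous expansion for $T$ yields $-\tfrac{\beta}{2}\,\E f''(T)+O(\beta^2)$. Subtracting leaves the residual
\[
-\tfrac{\beta}{2}\bigl[\E f''(W)-\E f''(T)\bigr],
\]
which is not absorbed by $\gamma_1+\gamma_2+\gamma_3$; moment matching equates the \emph{scalar} $\beta$ but not the random coefficient $\E f''(\cdot)$ in front of it. The paper resolves this by first replacing $\E f''(W)$ with $\mathcal N f''$, bounding the error via the Lipschitz property of $f''$ from Lemma~\ref{l2}:
\[
\bigl|\E f''(W)-\mathcal N f''\bigr|\le C\,\mathcal W_1(\mathcal L(W),N(0,1))\le C\,\mathcal W_2(\mathcal L(W),N(0,1)).
\]
This injects the target quantity $\mathcal W_2(\mathcal L(W),N(0,1))$ back into the right-hand side, and the proof only closes after solving the resulting recursive inequality (via $\sqrt{ab}\le \tfrac{1}{2\epsilon}a+\tfrac{\epsilon}{2}b$ with small $\epsilon$). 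Your sketch omits this recursion entirely, and without it the bound does not follow. A secondary point: the $O(\cdot)$ error in the expansion for $T$ is $C\beta^2$ (from Barbour's Theorem~1), not something you can declare ``negligible against $(\gamma_1+\gamma_2+\gamma_3)^{1/2}$'' a priori; this is precisely what, combined with Lemma~\ref{l3}, makes the $T$-to-normal leg contribute only $C|\beta|$.
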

\begin{remark}
The conditions (LD1)--(LD3) and the bound \eq{2} is a natural extension of (2.1)--(2.5) and (2.7) of \cite{BaKaRu89}.
The sizes of \emph{neighborhoods} $A_{ij}$ and $A_{ijk}$ are typically smaller than those used in \cite{ChSh04}.
It would be interesting to prove a bound for the Kolmogorov distance under the above setting.
\end{remark}


\subsection{Applications}

\subsubsection{$m$-dependence}

Let $X_1,\dots, X_n$ be a sequence of $m$-dependent random variables, namely, $\{X_i: i\leq j\}$ is independent of $\{X_i: i\geq j+m+1\}$ for any $j=1,\dots, n-m-1$.
Let $W=\sum_{i=1}^n X_i$.
Assume that $\E X_i=0$ and $\E W^2=1$.
We have the following corollary of Theorem \ref{t1}.
\begin{corollary}\label{cor1}
For sums of $m$-dependent random variables as above, we have
\be{
\mathcal{W}_2 (\mathcal{L}(W), N(0,1))\leq C\Big\{m^2 \sum_{i=1}^n \E |X_i|^3 +m^{3/2} (\sum_{i=1}^n \E X_i^4)^{1/2}\Big\}.
}
\end{corollary}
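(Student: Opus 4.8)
The plan is to apply Theorem \ref{t1} with intervals as neighborhoods and then reduce every mixed moment appearing in $\beta,\gamma_1,\gamma_2,\gamma_3$ to $\sum_i\E|X_i|^3$ or $\sum_i\E X_i^4$ by elementary inequalities, keeping careful track of the combinatorial multiplicities. First I would choose the neighborhoods. Since $X_a\indep X_b$ whenever $|a-b|\geq m+1$, I set $A_i=\{j:|i-j|\leq m\}$, $A_{ij}=\{l:\min(i,j)-m\leq l\leq\max(i,j)+m\}$ and $A_{ijk}=\{l:\min(i,j,k)-m\leq l\leq\max(i,j,k)+m\}$. To verify (LD1), let $L,R$ be the left and right parts of the complement of $A_i$; applying the defining cut at $j=i$ gives $(X_L,X_i)\indep X_R$, and the cut at $j=i-m-1$ gives $X_L\indep(X_i,X_R)$, whose marginal yields $X_i\indep X_L$. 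Together these give mutual independence of $X_i,X_L,X_R$, in particular $X_i\indep\{X_l:l\notin A_i\}$. The same two-cut argument applied to the endpoints of the relevant interval gives (LD2) and (LD3). With these choices $|A_i|\leq 2m+1$, and since $k\in A_{ij}$ forces $\max(i,j,k)-\min(i,j,k)\leq 3m$, also $|A_{ij}|\leq 3m+1$ and $|A_{ijk}|\leq 5m+1$; moreover the four indices $i,j,k,l$ occurring in $\gamma_1,\gamma_2,\gamma_3$ all lie in a common interval of length $O(m)$.

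Next I would bound $\beta$. Using $|xyz|\leq\tfrac13(|x|^3+|y|^3+|z|^3)$ term by term, it suffices to count, for each symbol, the number of tuples in which a fixed index can appear in each of its roles. In the first sum of $\beta$ a fixed index occurs in at most $|A_i|^2=O(m^2)$ tuples when it plays the role of $i$, and also in $O(m^2)$ tuples in the roles of $j$ or $k$ (using $j\in A_i\iff i\in A_j$). In the second sum a fixed index occurs in at most $|A_i|\,|A_{ij}|=O(m^2)$ tuples, where for the role of $k$ one uses that $k\in A_{ij}$ confines $i$ to an $O(m)$-window. Hence $|\beta|\leq Cm^2\sum_i\E|X_i|^3$.

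Finally, for the $\gamma$ terms I would homogenise the moments: by Hölder and Jensen, each of $\E|X_iX_jX_kX_l|$, $\E|X_iX_j|\,\E|X_kX_l|$ and $\E|X_iX_jX_k|\,\E|X_l|$ is bounded by $(\E X_i^4\,\E X_j^4\,\E X_k^4\,\E X_l^4)^{1/4}\leq\tfrac14(\E X_i^4+\E X_j^4+\E X_k^4+\E X_l^4)$. Since a fixed index appears in at most $|A_i|\,|A_{ij}|\,|A_{ijk}|=O(m^3)$ of the admissible quadruples in any of its four roles (again because all four indices are confined to an $O(m)$-window around it), this gives $\gamma_1+\gamma_2+\gamma_3\leq Cm^3\sum_i\E X_i^4$, so $(\gamma_1+\gamma_2+\gamma_3)^{1/2}\leq Cm^{3/2}(\sum_i\E X_i^4)^{1/2}$. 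Substituting the two bounds into \eqref{2} yields the corollary. I expect the only delicate point to be the bookkeeping of these multiplicities, namely confirming that a fixed index is constrained to an $O(m)$-window in every role it occupies; once that is settled, the moment inequalities are routine.
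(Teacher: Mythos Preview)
Your proposal is correct and follows essentially the same approach as the paper: the same interval neighborhoods $A_i,A_{ij},A_{ijk}$, the same reduction of mixed moments to pure moments via elementary inequalities, and the same $O(m^2)$ and $O(m^3)$ counting of admissible tuples containing a fixed index. Your write-up is in fact more explicit than the paper's, which simply asserts that (LD1)--(LD3) hold ``by the $m$-dependence assumption'' and disposes of $\gamma_2,\gamma_3$ by saying they ``have the same upper bound'' as $\gamma_1$.
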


\subsubsection{U-statistics}

Let $X_1, X_2,\dots$ be a sequence of i.i.d.\ random variables from a fixed distribution.
Let $m\geq 2$ be a fixed integer.
Let $h: \mathbb{R}^m\to \mathbb{R}$ be a fixed, symmetric, Borel-measurable function. 
We consider the \cite{Ho48} U-statistic 
\be{
\sum_{1\leq i_1<\dots <i_m\leq n} h(X_{i_1},\dots, X_{i_m}).
}
Assume that
\be{
\E h(X_1,\dots, X_m)=0,\   \E h^4(X_1,\dots, X_m)<\infty.
}
and the U-statistic is non-degenerate, namely,
\be{
\E g^2(X_1)>0,
}
where
\be{
g(x):=\E(h(X_1,\dots, X_m)|X_1=x).
}
Applying Theorem \ref{t1} to the U-statistic above yields the following result:
\begin{theorem}\label{t3}
Under the above setting, let 
\be{
W_n=\frac{1}{\sigma_n} \sum_{1\leq i_1<\dots <i_m\leq n} h(X_{i_1},\dots, X_{i_m}),
}
where
\be{
\sigma_n^2=\Var \big[  \sum_{1\leq i_1<\dots <i_m\leq n} h(X_{i_1},\dots, X_{i_m})   \big].
}
We have
\be{
\mathcal{W}_2(\mathcal{L}(W_n), N(0,1))\leq \frac{C}{\sqrt{n}} .
}
\end{theorem}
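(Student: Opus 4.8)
The plan is to write the normalized U-statistic $W_n$ as a sum of locally dependent summands indexed by $m$-subsets and then invoke Theorem \ref{t1}. Let $I$ be the collection of all $m$-subsets of $\{1,\dots,n\}$, and for $\mathbf{i}=\{i_1<\dots<i_m\}\in I$ put $Y_{\mathbf{i}}=\sigma_n^{-1}h(X_{i_1},\dots,X_{i_m})$, so that $W_n=\sum_{\mathbf{i}\in I}Y_{\mathbf{i}}$ with $\E Y_{\mathbf{i}}=0$ and $\E W_n^2=1$. Since $Y_{\mathbf{i}}$ depends only on $\{X_\ell:\ell\in\mathbf{i}\}$ and the $X_\ell$ are independent, the natural neighbourhoods are $A_{\mathbf{i}}=\{\mathbf{j}:\mathbf{j}\cap\mathbf{i}\neq\emptyset\}$, $A_{\mathbf{i}\mathbf{j}}=\{\mathbf{k}:\mathbf{k}\cap(\mathbf{i}\cup\mathbf{j})\neq\emptyset\}$ and $A_{\mathbf{i}\mathbf{j}\mathbf{k}}=\{\mathbf{l}:\mathbf{l}\cap(\mathbf{i}\cup\mathbf{j}\cup\mathbf{k})\neq\emptyset\}$: disjoint index sets correspond to functions of disjoint, hence independent, blocks of the $X_\ell$, so (LD1)--(LD3) hold.

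Two quantitative inputs then drive everything. First, the non-degeneracy assumption fixes the order of $\sigma_n$. By the Hoeffding decomposition, $\sigma_n^2=\sum_{c=1}^m\binom{n}{m}\binom{m}{c}\binom{n-m}{m-c}\zeta_c$, where $\zeta_c$ is the covariance of two copies of $h$ sharing exactly $c$ arguments; conditioning shows $\zeta_1=\E g^2(X_1)>0$. The $c=1$ term $\binom{n}{m}\,m\,\binom{n-m}{m-1}\zeta_1$ has exact order $n^{2m-1}$ and dominates the terms $c\geq 2$ (of order $n^{2m-c}$), so $\sigma_n^2\asymp n^{2m-1}$. Second, since $m$ is fixed, every neighbourhood above has size at most $Cn^{m-1}$ — the number of $m$-subsets meeting a fixed set of bounded cardinality — while $|I|=\binom{n}{m}\leq Cn^m$.

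The four quantities of Theorem \ref{t1} are now controlled by counting together with Hölder's inequality. Hölder gives $|\E Y_{\mathbf{i}}Y_{\mathbf{j}}Y_{\mathbf{k}}|\leq\sigma_n^{-3}\E|h|^3$ and $\E|Y_{\mathbf{i}}Y_{\mathbf{j}}Y_{\mathbf{k}}Y_{\mathbf{l}}|\leq\sigma_n^{-4}\E|h|^4$, with the same $\sigma_n^{-4}$ bound for the mixed products $\E|Y_{\mathbf{i}}Y_{\mathbf{j}}|\,\E|Y_{\mathbf{k}}Y_{\mathbf{l}}|$ and $\E|Y_{\mathbf{i}}Y_{\mathbf{j}}Y_{\mathbf{k}}|\,\E|Y_{\mathbf{l}}|$; all the $h$-moments are finite because $\E h^4<\infty$. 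The $\beta$-sum ranges over at most $|I|\,(Cn^{m-1})^2\leq Cn^{3m-2}$ triples, whence $|\beta|\leq Cn^{3m-2}\sigma_n^{-3}\leq Cn^{-1/2}$; each of $\gamma_1,\gamma_2,\gamma_3$ ranges over at most $|I|\,(Cn^{m-1})^3\leq Cn^{4m-3}$ quadruples, whence $\gamma_i\leq Cn^{4m-3}\sigma_n^{-4}\leq Cn^{-1}$. Substituting into \eq{2} yields $\mathcal{W}_2(\mathcal{L}(W_n),N(0,1))\leq C\big(n^{-1/2}+(n^{-1})^{1/2}\big)\leq Cn^{-1/2}$.

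The main obstacle is the bookkeeping that makes the powers of $n$ cancel exactly: one needs the sharp order $\sigma_n^2\asymp n^{2m-1}$ together with the correct neighbourhood counts $n^{3m-2}$ and $n^{4m-3}$. It is worth noting that no cancellation among third moments is needed for the $\beta$-term — the crude Hölder bound already delivers $n^{-1/2}$ — so the only genuinely load-bearing estimate is the variance lower bound $\sigma_n^2\gtrsim n^{2m-1}$, which is exactly where the non-degeneracy hypothesis $\E g^2(X_1)>0$ is used.
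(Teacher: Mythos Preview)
Your proof is correct and follows essentially the same route as the paper's own argument: the same intersection-based neighbourhoods $A_{\mathbf i},A_{\mathbf{ij}},A_{\mathbf{ijk}}$, the same counts $|I|\leq Cn^m$ and neighbourhood sizes $\leq Cn^{m-1}$, the variance asymptotic $\sigma_n^2\asymp n^{2m-1}$ from non-degeneracy, and the same crude H\"older/moment bounds plugged into Theorem~\ref{t1}. Your write-up is in fact slightly more explicit than the paper's (you spell out the Hoeffding variance formula and the H\"older step), but the strategy and the arithmetic are identical.
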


\begin{remark}
\cite{ChSh07} obtained a bound on the Kolmogorov distance in normal approximation for non-degenerate U-statistics. 
We refer to the references therein for a large literature on the rate of convergence in normal approximation for U-statistics.
In principle, we can take into account in our bound of those \emph{fixed} parameters in the above setting. 
However, we prefer to keep it simple and just show the correct rate of convergence in $n$.
\end{remark}

\subsubsection{Subgraph counts in the Erd\H{o}s-R\'enyi random graph}

Let $K(n,p)$ be the Erd\H{o}s-R\'enyi random graph with $n$ vertices. 
Each pair of vertices is connected with probability $p$ and remain disconnected with probability $1-p$, independent of all else.
Let $G$ be a given fixed graph.
For any graph $H$, let $v(H)$ and $e(H)$ denote the number of its vertices and edges, respectively.
Theorem \ref{t1} leads to the following result.
\begin{theorem}\label{t2}
Let $S$ be the number of copies (not necessarily induced) of $G$ in $K(n,p)$, and let $W=(S-\E S)/\sqrt{\Var(S)}$ be the standardized version.
 Then
\ben{\label{1}
\mathcal{W}_2 (\mathcal{L}(W), N(0,1))\leq C(G)
\begin{cases}
\psi^{-\frac{1}{2}} & \text{if}\ 0<p\leq \frac{1}{2}\\
n^{-1}(1-p)^{-\frac{1}{2}} & \text{if}\ \frac{1}{2}<p<1,
\end{cases}
}
where $C(G)$ is a constant only depending on $G$ and 
\be{
\psi=\min_{H\subset G, e(H)>0} \{n^{v(H)}p^{e(H)}\}.
}
\end{theorem}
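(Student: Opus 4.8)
The plan is to write $S$ as a sum of locally dependent indicators and invoke Theorem~\ref{t1}. Index by $\alpha$ the copies of $G$ sitting inside the complete graph on $[n]$, and let $I_\alpha=\prod_{e\in\alpha}\mathbbm{1}[e\in K(n,p)]$ be the indicator that all edges of $\alpha$ are present, so that $S=\sum_\alpha I_\alpha$. Put $X_\alpha=\sigma^{-1}(I_\alpha-\E I_\alpha)$ with $\sigma^2=\Var(S)$; then $W=\sum_\alpha X_\alpha$ has $\E X_\alpha=0$ and $\E W^2=1$. Since distinct edges are independent, $X_\alpha$ and $X_\beta$ are independent as soon as $\alpha$ and $\beta$ share no edge, so (LD1)--(LD3) hold with the nested neighborhoods $A_\alpha=\{\beta:\alpha,\beta\text{ share an edge}\}$, $A_{\alpha\beta}=\{\gamma:\gamma\text{ shares an edge with }\alpha\text{ or }\beta\}$, and $A_{\alpha\beta\gamma}$ defined analogously.

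The second step is the variance. For copies meeting in a subgraph $H=\alpha\cap\beta$ with $e(H)>0$ one has $\Cov(I_\alpha,I_\beta)=p^{2e(G)-e(H)}(1-p^{e(H)})\ge0$, and the covariance vanishes otherwise; summing over overlap types, each carrying $\asymp n^{2v(G)-v(H)}$ placements, gives $\sigma^2\asymp\max_{H\subset G,\,e(H)>0}n^{2v(G)-v(H)}p^{2e(G)-e(H)}(1-p^{e(H)})$. Because all covariances are nonnegative, the lower bound only requires exhibiting the dominant overlap, and this is where the two regimes separate: for $p\le\tfrac12$ we have $1-p^{e(H)}\asymp1$, so with $\E S\asymp n^{v(G)}p^{e(G)}$ the maximum equals $(\E S)^2/\psi$; for $p>\tfrac12$ we have $1-p^{e(H)}\asymp1-p$ and $p^{2e(G)-e(H)}\asymp1$, and the maximum is attained at the single-edge overlap, giving $\sigma^2\asymp n^{2v(G)-2}(1-p)$. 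Getting $\sigma$ to the correct order in both regimes is essential, since it enters the final bound through $\sigma^{-3}$ and $\sigma^{-2}$.

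Next I would bound $\beta,\gamma_1,\gamma_2,\gamma_3$ by enumerating configurations. The neighborhood constraints force every tuple occurring in these sums to be connected in the edge-intersection graph; for $\beta$, centering additionally annihilates any tuple that splits into two edge-disjoint groups. For a connected cluster with union graph $F$ there are $\asymp n^{v(F)}$ placements, and the elementary pointwise bound $|X_\alpha|\le I_\alpha+p^{e(G)}$ together with the observation that adjoining a copy adds at most $e(G)$ edges shows $|\E X_\alpha X_\beta X_\gamma|$ and $\E|X_\alpha X_\beta X_\gamma X_\delta|$ are $\lesssim p^{e(F)}$; optimizing $n^{v(F)}p^{e(F)}$ over clusters picks out the minimal-weight overlap, namely the $H$ realizing $\psi$. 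After dividing by $\sigma^3$ and $\sigma^2$ this yields $|\beta|\lesssim\psi^{-1/2}$ and $\gamma_1,\gamma_2,\gamma_3\lesssim\psi^{-1}$, so \eqref{2} gives the bound $\psi^{-1/2}$ for $p\le\tfrac12$. For $p>\tfrac12$ the crude estimate $p^{e(F)}\asymp1$ is too lossy and would cost a factor of $1-p$; instead one linearizes $X_\alpha\approx-\sum_{e\in\alpha}\widetilde J_e$ in the centered missing-edge indicators $\widetilde J_e=\mathbbm{1}[e\notin K(n,p)]-(1-p)$, which are independent across edges, and exploits the resulting cancellation to gain an extra factor $1-p$ in each third and fourth moment. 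This gives $|\beta|\lesssim n^{-1}(1-p)^{-1/2}$ and $\gamma_i\lesssim n^{-2}(1-p)^{-1}$, hence the second branch of \eqref{1}.

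The main obstacle is the combinatorial optimization in the third step: one must classify all edge-intersection patterns of three and four copies of $G$, count the placements and bound the (joint or factorized, centered or absolute) moment of each, and then check that the extremal pattern is precisely the one governed by $\psi$ in the sparse regime and by a single shared edge in the dense regime. The bookkeeping for $\gamma_2$ and $\gamma_3$ is the most delicate, because the factorized expectations decouple the overlaps that control the vertex count from those that control the moment, so one must argue separately that these configurations never exceed $\psi^{-1}$ (respectively $n^{-2}(1-p)^{-1}$). Verifying that all exponents of $n$, $p$ and $1-p$ collapse, after the $\sigma$-normalization, to the clean two-regime bound \eqref{1} is where the entire calculation lives.
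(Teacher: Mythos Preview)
Your overall strategy---cast $W$ as a locally dependent sum indexed by copies of $G$, take edge-sharing neighborhoods $A_\alpha,A_{\alpha\beta},A_{\alpha\beta\gamma}$, and feed the resulting $\beta,\gamma_1,\gamma_2,\gamma_3$ into Theorem~\ref{t1}---is exactly what the paper does, and for $0<p\le\tfrac12$ your subgraph-enumeration bound on the $\gamma_i$'s (sum over overlap types, count $\asymp n^{v(F)}$ placements, optimize $n^{v(F)}p^{e(F)}$ to pull out $\psi$) is the same calculation the paper carries out, ultimately citing (3.10) of Barbour--Karo\'nski--Ruci\'nski.

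Where you diverge is the dense regime $p>\tfrac12$. The paper does \emph{not} linearize in missing-edge indicators; it uses a much cheaper estimate. Since $|X_\beta|\le\sigma^{-1}$ always and $\E|X_\alpha|=2\sigma^{-1}p^{e}(1-p^{e})\le C\sigma^{-1}(1-p)$, every term in $\gamma_1,\gamma_2,\gamma_3$ is at most $C\sigma^{-4}(1-p)$; then one simply counts terms by $|I|\cdot\max|A_\alpha|\cdot\max|A_{\alpha\beta}|\cdot\max|A_{\alpha\beta\gamma}|\le Cn^{v}(n^{v-2})^3$ and divides by $\sigma^4\ge Cn^{4v-4}(1-p)^2$ to get $\gamma\le Cn^{-2}(1-p)^{-1}$. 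No configuration analysis is needed at all in this regime. Your linearization would ultimately recover the same factor of $1-p$ (via $\E|\widetilde J_e|=2p(1-p)$), but note that the $\gamma_i$ involve \emph{absolute} values, so the ``cancellation'' you invoke is not really operative there---what actually saves you is just the first-moment bound $\E|X_\alpha|=O(\sigma^{-1}(1-p))$, which is the paper's one-line argument. Your route works, but it is considerably more effort than necessary and the cancellation rationale applies to $\beta$, not to the $\gamma_i$.
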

\begin{remark}
\cite{BaKaRu89} proved the same bound as in \eq{1} for the weaker Wasserstein-1 distance. 
In the special case where $G$ is a triangle, the bound in \eq{1} reduces to
\be{
C
\begin{cases}
n^{-\frac{3}{2}} p^{-\frac{3}{2}} & \text{if}\ 0<p\leq n^{-\frac{1}{2}} \\
n^{-1}p^{-\frac{1}{2}} & \text{if}\  n^{-\frac{1}{2}}<p\leq \frac{1}{2} \\
n^{-1} (1-p)^{-\frac{1}{2}} & \text{if}\ \frac{1}{2}<p<1.
\end{cases}
}
\cite{Ro17} proved the same bound for the Kolmogorov distance in this special case.
\end{remark}

\subsection{Conjecture on Wasserstein-$p$ bounds}

Here we state a conjecture on Wasserstein-$p$ bounds for any positive integer $p$. We provide supporting arguments for the conjecture at the end of the next section.
Let $W=\sum_{i\in I} X_i$ for an index set $I$ with $\E X_i=0, \E W^2=1$ and satisfies (LD1)--(LD$(p+1)$) where
\begin{enumerate}
\item[(LD$m$):] For each $i_1\in I, i_2\in A_{i_1} \dots, i_m\in A_{i_1\dots i_{m-1}}$, there exists $A_{i_1\dots i_m}\supset A_{i_1\dots i_{m-1}}$ such that $\{X_{i_1},\dots, X_{i_m}\}$ is independent of $\{X_j: j\notin A_{i_1\dots i_m}\}$.
\end{enumerate}

\begin{conjecture}\label{con1}
Under the above setting, we have
\ben{\label{24}
\mathcal{W}_p(\mathcal{L}(W), n(0,1))\leq C_p\sum_{m=1}^p (R_m)^{\frac{1}{m}},
}
where $C_p$ is a constant only depending on $p$,
\be{
R_m=\sum_{i_1\in I} \sum_{i_2\in A_{i_1}} \dots \sum_{i_{m+2}\in A_{i_1\dots i_{m+1}}} \sum_{(\E)}
\E |X_{i_1} X_{i_2}| (\E) |X_{i_3}|\cdots (\E) |X_{i_{m+2}}|,
}
and $\sum_{(\E)}$ denotes the sum over a possible $\E$ in front of each $X_i$ with the constraint that any pair of $\E's$ must be separated by at least two $X_i's$.
\end{conjecture}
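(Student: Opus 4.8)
The plan is to follow the three-step strategy behind Theorem~\ref{t1}, but to push the Stein-method expansion out to order $p$ and to distribute the resulting terms between two comparisons according to their order. First I would construct an auxiliary i.i.d.\ sum $T=\tfrac{1}{\sqrt{M}}\sum_{j=1}^{M}\xi_j$ with $\E\xi_j=0$, $\E\xi_j^2=1$ (so that $\Var T=1$), whose common law is chosen so that the \emph{effective cumulants} of $T$ of orders $3,\dots,p+1$ match the signed sums appearing in the formal expansion of $\E f(W)$, the order-$3$ one being the analogue of $\beta$ from Theorem~\ref{t1}. The purpose of $T$ is to carry all the lower-order, \emph{signed} contributions, so that the direct comparison between $W$ and $T$ retains only a single top-order, absolute-value remainder. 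Producing such a $T$ with the correct scaling in $M$ and the prescribed cumulants is the first technical point, and is the analogue of the device used by \cite{Ri09} when $p=2$.

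The central step is an order-$p$ expansion obtained by iterating the Stein identity for $W$. For every $f$ with $\|f^{(p)}\|_\infty\leq1$ I would establish
\[
\E f(W)=\E f(Z)+\sum_{r=3}^{p+1}\Lambda_r(f)+\mathrm{Rem}_p(f),\qquad Z\sim N(0,1),
\]
where each $\Lambda_r(f)$ is a signed, cumulant-type functional built from the neighborhoods $A_{i_1\cdots i_{r-1}}$, and $|\mathrm{Rem}_p(f)|\leq C\,R_p$. The combinatorics is exactly what generates the pattern in the definition of $R_m$: each time one replaces a factor $X_i\,f^{(k)}(W)$ by its Taylor expansion on the neighborhood $A_i$ and invokes (LD1)--(LD$(p+1)$) to factor expectations, products $X_{i_1}\cdots X_{i_{m+2}}$ arise in which independent blocks detach as separate expectations, and two consecutive $\E$'s are always separated by at least two factors because each Stein replacement consumes a pair of indices. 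Running the same expansion for $T$ and using the cumulant matching cancels every $\Lambda_r$, leaving
\[
\zeta_p\bigl(\mathcal{L}(W),\mathcal{L}(T)\bigr)=\sup_{\|f^{(p)}\|_\infty\leq1}\bigl|\E f(W)-\E f(T)\bigr|\leq C\,R_p,
\]
where $\zeta_p$ is Zolotarev's ideal distance of order $p$.

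In the last step I would combine this with the Wasserstein--Zolotarev comparison and the i.i.d.\ bound. The Rio-type inequality $\mathcal{W}_p\leq C_p\,\zeta_p^{1/p}$ (valid under the finite-moment conditions implicit in the $R_m$) together with the previous display gives $\mathcal{W}_p(\mathcal{L}(W),\mathcal{L}(T))\leq C\,R_p^{1/p}$, which is the $m=p$ term of \eqref{24}. The remaining terms come from $\mathcal{W}_p(\mathcal{L}(T),N(0,1))$, which I would bound by Bobkov's optimal Wasserstein-$p$ estimate for sums of i.i.d.\ random variables, the generalization of Lemma~\ref{l3}: since the order-$(m+2)$ cumulant of $T$ is the signed version of $R_m$ and enters that estimate at power $1/m$, this contributes $C\sum_{m=1}^{p-1}(R_m)^{1/m}$. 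The triangle inequality $\mathcal{W}_p(\mathcal{L}(W),N(0,1))\leq\mathcal{W}_p(\mathcal{L}(W),\mathcal{L}(T))+\mathcal{W}_p(\mathcal{L}(T),N(0,1))$ then yields \eqref{24}.

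The hard part will be the order-$p$ Stein expansion and its bookkeeping: controlling the proliferation of terms created by $p$ successive Stein replacements, verifying that the uncancelled remainder is dominated by $R_p$ alone (and not by a larger mixture of the $R_m$), and confirming that the separation constraint in the definition of $R_m$ is precisely the one forced by the detachment of independent blocks. A secondary difficulty is the justification of the cumulant-matched sum $T$ in the first step, together with the exact form of the i.i.d.\ Wasserstein-$p$ bound needed so that its $m$-th contribution appears at power $1/m$; it is these points that keep the statement at the level of a conjecture.
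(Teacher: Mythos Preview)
Your three-step strategy---order-$p$ Stein expansion, comparison with a cumulant-matched i.i.d.\ sum through Zolotarev's metric and Lemma~\ref{l1}, then the triangle inequality together with Lemma~\ref{l3}---is exactly the skeleton of the paper's supporting arguments. The divergence is in how the expansion step is closed, and that is where you miss the paper's key device.

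You aim for $\E h(W)=\N h+\sum_r\Lambda_r(h)+O(R_p)$ and hence $\zeta_p(\mathcal{L}(W),\mathcal{L}(T))\leq CR_p$; you rightly flag this clean remainder as ``the hard part''. The paper does \emph{not} attempt to prove it. After the first-level expansion
\[
\E h(W)-\N h=-\sum_{m=1}^{p-1}\frac{\kappa_{m+2}}{(m+1)!}\,\E f^{(m)}(W)+O(R_p),
\]
it handles each $\E f^{(m)}(W)$ by invoking the lower-order expansion (Proposition~\ref{p1} and its analogues), whose own remainder is deliberately left in the form $C\,\mathcal{W}_p(\mathcal{L}(W),N(0,1))$ via $|\E f^{(m)}(W)-\N f^{(m)}|\leq C\,\mathcal{W}_1\leq C\,\mathcal{W}_p$. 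The resulting inequality is therefore \emph{self-referential}; for $p=3$ it reads
\[
\mathcal{W}_3\leq C\bigl(R_1+R_2^{1/2}+R_3^{1/3}\bigr)+C\bigl(R_1+R_2^{1/2}\bigr)^{2/3}\,\mathcal{W}_3^{1/3},
\]
and is solved by the same Young-inequality trick as in the last paragraph of the proof of Theorem~\ref{t1}.

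Your route would instead require the fully iterated expansion to leave remainder $O(R_p)$. But replacing $\E f^{(m)}(W)$ by its own order-$(p-m)$ expansion produces a fresh $O(R_{p-m})$ multiplied by the coefficient $\kappa_{m+2}=O(R_m)$, so cross terms $R_mR_{p-m}$ are unavoidable, and for general locally dependent sums $R_mR_{p-m}\not\leq CR_p$. (In the i.i.d.\ case $R_m\asymp n^{-m/2}$ and these cross terms do collapse to $R_p$, which is why Barbour's expansion has a clean remainder; the coincidence fails under mere local dependence.) In particular your clean split---$R_p^{1/p}$ coming from the $W$--$T$ leg and $\sum_{m<p}R_m^{1/m}$ from the $T$--$N(0,1)$ leg---does not survive: both legs end up carrying the full sum $\sum_m R_m^{1/m}$. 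The paper's recursion is precisely what absorbs the cross terms without ever needing $|\mathrm{Rem}_p|\leq CR_p$, so the obstacle you single out as keeping the statement a conjecture is not the one the paper's argument actually faces.
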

\begin{remark}
The case $p=1$ was proved by \cite{BaKaRu89}.
For the case $p=2$, we have $R_2=\gamma_1+\gamma_2+\gamma_3$ where $\gamma_1$--$\gamma_3$ are defined as in Theorem \ref{t1}. In this case, the bound in \eq{24} is clearly an upper bound for the bound in \eq{2}.
\end{remark}

\section{Proofs}

\subsection{Preliminaries}
To prepare for the proof of Theorem \ref{t1}, we need the following lemmas.
The first lemma relates Wasserstein-$p$ distances to Zolotarev's ideal metrics.

\begin{definition}\label{d1}
For $p>1$, let $l=\lceil p \rceil -1$ be the largest integer that is smaller than $p$ and $\Lambda_p$ be the class of $l$-times continuously differentiable functions
$f:\mathbb{R}\to \mathbb{R}$ such that $|f^{(l)}(x)-f^{(l)}(y)|\leq |x-y|^{p-l}$ for any $(x,y)\in \mathbb{R}^2$. The ideal distance $Z_p$ of Zolotarev between two probability distributions $\mu$ and $\nu$ is defined by
\be{
Z_p(\mu, \nu)=\sup_{f\in \Lambda_p}\Big\{ \int_{\mathbb{R}} fd\mu -\int_{\mathbb{R}} fd\nu \Big\}.
}
\end{definition}

\begin{lemma}[Theorem 3.1 of \cite{Ri09}]\label{l1}
For any $p>1$ there exists a positive constant $C_p$, such that for any pair $(\mu,\nu)$ of laws on the real line with finite absolute moments of order $p$,
\be{
\mathcal{W}_p (\mu, \nu)\leq C_p \big[Z_p (\mu, \nu) \big]^{\frac{1}{p}}.
}
\end{lemma}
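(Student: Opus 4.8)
The plan is to reduce everything to the one-dimensional quantile representation of the Wasserstein distance and then to construct a near-optimal Zolotarev test function. Write $F$ and $G$ for the distribution functions of $\mu$ and $\nu$, and $F^{-1}, G^{-1}$ for their quantile functions. Since in dimension one the monotone coupling is optimal,
\[
\mathcal{W}_p(\mu,\nu)^p=\int_0^1 |F^{-1}(u)-G^{-1}(u)|^p\,du,
\]
so it suffices to prove $\int_0^1|F^{-1}-G^{-1}|^p\,du\le C_p^{\,p}\,Z_p(\mu,\nu)$, i.e.\ to bound $\mathcal{W}_p^p$ from above by a multiple of $Z_p$. If $Z_p(\mu,\nu)=\infty$ there is nothing to prove, so I would assume $Z_p(\mu,\nu)<\infty$. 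Writing $l=\lceil p\rceil-1$ and $\alpha=p-l\in(0,1]$, finiteness of $Z_p$ forces the moments of $\mu$ and $\nu$ of orders $1,\dots,l$ to coincide (otherwise a monomial $x^j$ with $1\le j\le l$, whose $l$-th derivative has zero H\"older seminorm and so lies in $\Lambda_p$ under arbitrary scaling, makes the supremum infinite); this matching is exactly what I will need for the integrations by parts below.

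The bridge between the two metrics is integration by parts. Set $D_0=F-G$ and, using the matched moments to guarantee integrability and decay at $\pm\infty$, define the iterated integrals $D_k(x)=\int_{-\infty}^x D_{k-1}(t)\,dt$ for $1\le k\le l-1$. For any $f\in\Lambda_p$, integrating by parts $l$ times gives
\[
\int_{\IR} f\,d(\mu-\nu)=(-1)^l\int_{\IR} f^{(l)}(x)\,D_{l-1}(x)\,dx,
\]
the boundary contributions being controlled by the matched moments and the finite $p$-th moments. Hence $Z_p(\mu,\nu)$ equals the supremum of $\pm\int_{\IR} g\,D_{l-1}\,dx$ as $g=f^{(l)}$ ranges over the unit ball of the homogeneous $\alpha$-H\"older seminorm, subject to the admissibility constraints inherited from $\Lambda_p$. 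This reformulates $Z_p$ as a single dual-H\"older extremal problem driven by the fixed function $D_{l-1}$, which is far more tractable than the original supremum over $\Lambda_p$.

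The remaining and decisive step is to exhibit one admissible $f$ for which $\int f\,d(\mu-\nu)\ge c_p\,\mathcal{W}_p^p$. Note the direction carefully: a crude Taylor expansion of $f(X)-f(Y)$ along the monotone coupling $(X,Y)=(F^{-1}(U),G^{-1}(U))$, with the H\"older remainder estimated through a Beta-function integral, controls $\int f\,d(\mu-\nu)$ from \emph{above} by $C_p\,\IE|X-Y|^p=C_p\mathcal{W}_p^p$, which is the wrong inequality; the content of the lemma is the reverse lower bound witnessed by a single test function. I would therefore build the extremal $g=f^{(l)}$ from the coupling, taking it to be a smoothed fractional-integral version of the sign-and-magnitude profile of the quantile gap $F^{-1}-G^{-1}$, so that $\int f\,d(\mu-\nu)$ reconstructs $\int_0^1|F^{-1}-G^{-1}|^p\,du$ up to a dimensional constant, and then verify that this $g$ lies, up to a fixed factor, in the $\alpha$-H\"older ball.

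The main obstacle is exactly this last construction together with the verification of the H\"older bound $[g]_\alpha\le C_p$: it requires translating the \emph{horizontal} discrepancy carried by the quantile gap $F^{-1}-G^{-1}$ into the \emph{vertical} data $D_{l-1}$ appearing in the integration-by-parts formula, and controlling the fractional smoothness of the reconstructed test function. The duality $F^{-1}(u)\le x\iff u\le F(x)$ and the monotonicity of $F^{-1},G^{-1}$ are the structural facts that make this comparison possible; handling unbounded supports and the fractional part $\alpha$ when $p\notin\IZ$ is where the estimates become delicate.
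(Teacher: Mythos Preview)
The paper does not prove this lemma; it is quoted verbatim as Theorem~3.1 of \cite{Ri09} and used as a black box in the subsequent arguments. So there is no in-paper proof to compare against.

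As for your outline on its own merits: the reduction via the quantile coupling, the observation that finiteness of $Z_p$ forces matching of moments up to order $l$, and the $l$-fold integration by parts rewriting $\int f\,d(\mu-\nu)$ in terms of $f^{(l)}$ against iterated antiderivatives of $F-G$ are all correct and are indeed the scaffolding Rio uses. But the proof hinges entirely on the step you yourself flag as ``the main obstacle'': producing an explicit $f\in\Lambda_p$ with $\int f\,d(\mu-\nu)\ge c_p\,\mathcal W_p^p$ and verifying $[f^{(l)}]_\alpha\le C_p$. Your description of this construction (``a smoothed fractional-integral version of the sign-and-magnitude profile of the quantile gap'') is a heuristic, not a construction, and you give no argument for the H\"older bound. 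This is exactly where all the work lies; in Rio's paper the construction and the accompanying estimates occupy the bulk of the proof and are far from routine, particularly for non-integer $p$. As written, then, your proposal is a correct diagnosis of what must be done rather than a proof: the decisive step is identified but not executed.
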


We use Stein's method to obtain the asymptotic expansion \eq{4} in the proof of Theorem \ref{t1}. 
Stein's method was discovered by \cite{St72} to prove central limit theorems. 
The method has been generalized to other limit theorems and drawn considerable interest recently.
We refer to the book by \cite{ChGoSh11} for an introduction to Stein's method.
\cite{Ba86} used Stein's method to obtain an asymptotic expansion for expectations of smooth functions of sums of independent random variables. \cite{RiRo03} considered a related expansion for dependency-neighborhoods chain structures.

For a function $h$, denote $\mathcal{N} h:=\E h(Z)$, where $Z\sim N(0,1)$, provided that the expectation exists.
Consider the Stein equation
\ben{\label{7}
f'(w)- w f(w)=h(w)-\mathcal{N} h.
}
Let
\besn{\label{33}
f_h(w)=& \int_{-\infty}^w e^{\frac{1}{2}(w^2-t^2)} \big\{ h(t)-\N h  \big\} dt  \\
=& -\int_{w}^\infty e^{\frac{1}{2}(w^2-t^2)} \big\{ h(t)-\mathcal{N} h  \big\} dt.
}
We will use the following lemma.
\begin{lemma}[Special case of Lemma 6 of \cite{Ba86}]\label{l2}
For any positive integer $p>1$, let $h\in \Lambda_p$ where $\Lambda_p$ is defined in Definition \ref{d1}.
Then $f_h$ in \eq{33} is a solution to \eq{7}.
Moreover, $f_h$ is $p$ times differentiable, and satisfies
\be{
|f_h^{(p)}(x)-f_h^{(p)}(y)|\leq C_p|x-y|
}
for any $x, y\in \mathbb{R}$.
\end{lemma}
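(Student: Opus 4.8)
\medskip

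The plan is to deduce the two assertions --- that $f_h$ is $p$ times differentiable and that $f_h^{(p)}$ is Lipschitz --- from the Stein equation itself, together with a smoothing (Ornstein--Uhlenbeck) representation of its solution. First I would verify directly from \eqref{33} that $f_h$ solves \eqref{7}: differentiating $f_h(w)=e^{w^2/2}\int_{-\infty}^w e^{-t^2/2}\{h(t)-\N h\}\,dt$ gives $f_h'(w)=wf_h(w)+h(w)-\N h$. Since $h\in\Lambda_p$ is $(p-1)$ times continuously differentiable, I can differentiate this identity repeatedly and obtain by induction
\[
f_h^{(k+1)}(w)=wf_h^{(k)}(w)+k f_h^{(k-1)}(w)+h^{(k)}(w),\qquad 1\le k\le p-1,
\]
each step being legitimate because $h^{(k)}$ is continuous for $k\le p-1$. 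This already establishes that $f_h$ is $p$ times (continuously) differentiable, the first claim of the lemma.

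For the quantitative bound I would pass to the probabilistic representation
\[
f_h(w)=-\int_0^\infty \E\bigl[h'\bigl(e^{-t}w+\sqrt{1-e^{-2t}}\,Z\bigr)\bigr]\,e^{-t}\,dt,\qquad Z\sim N(0,1),
\]
which also solves \eqref{7} and is well defined because $h\in\Lambda_p$ has at most polynomial growth. Differentiating under the integral sign brings down a factor $e^{-t}$ at each step; after $p-1$ differentiations the derivatives land on $h^{(p)}$, which satisfies $\|h^{(p)}\|_\infty\le1$ by the hypothesis $h\in\Lambda_p$ (with $p$ an integer). Transferring the single remaining derivative onto the Gaussian kernel by Gaussian integration by parts produces a factor $Z$ and the weight $(1-e^{-2t})^{-1/2}$, which is integrable near $t=0$; hence
\[
\bigl\|f_h^{(p)}\bigr\|_\infty\le \E|Z|\int_0^\infty\frac{e^{-(p+1)t}}{\sqrt{1-e^{-2t}}}\,dt\,\|h^{(p)}\|_\infty=:C_p<\infty .
\]
The same scheme, transferring one further derivative, yields a representation of the increment $f_h^{(p)}(x)-f_h^{(p)}(y)$ in which the Gaussian carries a Hermite weight; this is where the Lipschitz estimate must be extracted.

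The hard part will be the Lipschitz bound. Writing $g=h^{(p-1)}$ (Lipschitz with constant $1$) and transferring two derivatives onto the kernel, one is led to estimate integrals of the shape
\[
f_h^{(p)}(x)-f_h^{(p)}(y)=-\int_0^\infty\frac{e^{-(p+1)t}}{1-e^{-2t}}\,\E\Bigl[(Z^2-1)\bigl(g(e^{-t}x+\sqrt{1-e^{-2t}}Z)-g(e^{-t}y+\sqrt{1-e^{-2t}}Z)\bigr)\Bigr]\,dt .
\]
Here the weight $(1-e^{-2t})^{-1}\sim(2t)^{-1}$ is not integrable near $t=0$, so bounding the integrand in absolute value diverges (indeed it loses a logarithmic factor). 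The saving mechanism is the orthogonality of the Hermite factor $Z^2-1$ to affine functions of $Z$: since $\E[(Z^2-1)(\alpha+\beta Z)]=0$, the inner expectation is small exactly when $g$ is close to affine on the scale $\sqrt{1-e^{-2t}}$, which is precisely the small-$t$ regime responsible for the apparent divergence. Retaining this signed cancellation --- rather than passing to absolute values --- is what produces a genuine bound of order $|x-y|$. Carrying out this singular-kernel estimate carefully, uniformly in $x,y$, is the substance of Lemma 6 of \cite{Ba86}, which I would invoke to supply the constant $C_p$ in the Lipschitz bound.
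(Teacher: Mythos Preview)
The paper does not prove this lemma at all; it is simply quoted as a special case of Lemma~6 of \cite{Ba86} and used as a black box. There is therefore no ``paper's own proof'' to compare against.

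Your proposal goes well beyond what the paper does: you correctly verify that $f_h$ solves \eqref{7}, derive the recursion $f_h^{(k+1)}=wf_h^{(k)}+kf_h^{(k-1)}+h^{(k)}$ to get $p$-fold differentiability, and then outline the Ornstein--Uhlenbeck representation, Gaussian integration by parts, and the Hermite-orthogonality cancellation that together yield the Lipschitz bound on $f_h^{(p)}$. This is an accurate sketch of the mechanism behind Barbour's result. Two small technical points: for integer $p$ the hypothesis $h\in\Lambda_p$ only gives that $h^{(p-1)}$ is $1$-Lipschitz, so $h^{(p)}$ exists merely almost everywhere with $|h^{(p)}|\le 1$ a.e.\ (Rademacher); and in your derivative count you should track carefully that computing $f_h^{(p)}$ from the representation involving $h'$ requires transferring one derivative onto the Gaussian already at that stage.

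The more substantive issue is circularity: in your last sentence you defer the singular-kernel estimate --- the one genuinely delicate step --- back to Lemma~6 of \cite{Ba86}, which is precisely the result being stated. So as a citation your proposal coincides with the paper's, and as a self-contained proof it stops exactly where the real work begins. If you want an independent argument, the Hermite-cancellation estimate near $t=0$ must be carried out explicitly rather than invoked.
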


In the final step of the proof of Theorem \ref{t1}, we will invoke the known Wasserstein-2 bounds in the central limit theorem for sums of i.i.d.\ random variables. The following result was recently proved by \cite{Bob18}.

\begin{lemma}[Theorem 1.1 of \cite{Bob18}]\label{l3}
Let $V_n=\sum_{i=1}^n \xi_i$ where $\{\xi_1,\dots, \xi_n\}$ are independent, with $\E \xi_i=0$ and $\E V_n^2=1$. Then for any real $p\geq 1$,
\ben{\label{17}
\mathcal{W}_p(\mathcal{L}(V_n), N(0,1))\leq C_p \big[ \sum_{i=1}^n \E |\xi_i|^{p+2} \big]^{\frac{1}{p}},
}
where $C_p$ continuously depends on $p$.
\end{lemma}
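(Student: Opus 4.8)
The plan is to work directly with the one--dimensional quantile representation of the transport cost rather than through the Zolotarev bound of Lemma \ref{l1}, whose naive summand--by--summand use is too lossy for this sharp Lyapunov statement. Writing $F_n$ for the distribution function of $V_n$, $\Phi$ for that of $N(0,1)$, and $L_r=\sum_{i=1}^n\E|\xi_i|^r$, one has
\be{
\mathcal{W}_p(\mathcal{L}(V_n),N(0,1))^p=\int_0^1\babs{F_n^{-1}(t)-\Phi^{-1}(t)}^p\,dt,
}
so the claim reduces to showing $\int_0^1\babs{F_n^{-1}(t)-\Phi^{-1}(t)}^p\,dt\leq C_p L_{p+2}$. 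I would prove this by transferring everything to quantitative closeness of $F_n$ and $\Phi$, splitting $(0,1)$ into a central/moderate block and the two extreme tails and estimating each separately.

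On the central block the key input is a non--uniform Berry--Esseen estimate, $|F_n(x)-\Phi(x)|\leq CL_3/(1+|x|^3)$, which is precisely where the characteristic--function analysis of \cite{Bob18} enters, through Esseen's smoothing inequality and direct bounds on $\E e^{\mathrm{i}sV_n}$. Since the normal density is bounded below on this range, the CDF bound converts into $\babs{F_n^{-1}(t)-\Phi^{-1}(t)}\lesssim L_3$ there, so that the central contribution is of order $L_3^p$. The numerical inequality $L_3^p\leq L_{p+2}$ then closes this piece: it follows from the moment interpolation $\E|\xi_i|^3\leq(\E\xi_i^2)^{1-1/p}(\E|\xi_i|^{p+2})^{1/p}$ combined with H\"older's inequality over $i$ and the normalisation $\sum_i\E\xi_i^2=1$.

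On the extreme tails I would stop comparing quantiles pointwise and instead estimate truncated moments: the substitution $t=F_n(x)$ identifies $\int_0^\delta\babs{F_n^{-1}(t)}^p\,dt$ with $\E[\,|V_n|^p;\,V_n<F_n^{-1}(\delta)\,]$, and a Rosenthal inequality bounds the corresponding moment through $\E|V_n|^{p+2}\leq C_p(1+L_{p+2})$; comparing against the Gaussian tails then shows that the excess over the normal is governed by $L_{p+2}$, which is exactly why the exponent $p+2$ appears. The hard part will be the seam between the two regimes: the polynomially decaying CDF estimate must be balanced against the super--polynomial growth of $1/\Phi'(\Phi^{-1}(t))$ near the endpoints, so the cut $\delta=\delta(n)$ — and, for sharp constants, the first Edgeworth correction — must be tuned so that the entire moderate--deviation range is covered without loss. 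Making this matching quantitative across all of $(0,1)$ is the delicate heart of the argument, and it is exactly what the characteristic--function approach of \cite{Bob18} is designed to deliver.
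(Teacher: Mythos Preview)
The paper does not prove this statement at all: Lemma~\ref{l3} is simply quoted as Theorem~1.1 of \cite{Bob18} and used as a black box in Step~3 of the proof of Theorem~\ref{t1} (the triangle-inequality step, to bound $\mathcal{W}_2(\mathcal{L}(V_n),N(0,1))$). The only commentary the paper adds is the historical remark that \cite{Ri09} first handled $p\in(1,2]$ and the i.i.d.\ case. So there is no in-paper proof for your proposal to be compared against.

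As for the proposal itself, what you outline is a reasonable caricature of the Bobkov argument (quantile representation, non-uniform Berry--Esseen on a central block, moment/tail control on the extremes), and the interpolation $L_3^p\leq L_{p+2}$ you invoke is correct. But as you yourself flag, the substantive work is entirely in the ``seam'': converting the non-uniform CDF bound into a quantile bound uniformly over the moderate-deviation range, where $1/\Phi'(\Phi^{-1}(t))$ blows up, is exactly the delicate point, and your sketch does not supply the mechanism for it beyond pointing to \cite{Bob18}. In particular, Bobkov's actual proof does not proceed by a crude central/tail split with Rosenthal on the tails; it goes through a signed Edgeworth-corrected measure and sharp non-uniform local bounds derived from characteristic functions, precisely because the naive splitting loses a logarithm (or worse) at the seam. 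So as a proof the proposal is incomplete at the one place that matters, though as an orientation for reading \cite{Bob18} it is fine.
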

The results for $p\in  (1,2]$ and for $p>1$ but i.i.d.\ case were first proved by \cite{Ri09}, who also showed that the bound in \eq{17} is optimal.

\subsection{Proof of Theorem \ref{t1}}

As noted in the Introduction, the proof consists of three steps.
We first obtain an asymptotic expansion for $\E h(W)$ for $h\in \Lambda_2$.
We then use the expansion and Lemma \ref{l1} to control the Wasserstein-2 distance between the distributions of $W$ and a sum of i.i.d.\ random variables. 
Finally, we use the triangle inequality and known Wasserstein-2 bounds in Lemma \ref{l3} for sums of i.i.d.\ random variables to prove our main result.
Without loss of generality, we assume that the right-hand side of \eq{2} is finite.

\subsubsection{Asymptotic expansion for $\E h(W)$}
In this step, we prove the following proposition.
\begin{proposition}\label{p1}
Let $W$ be as in Theorem \ref{t1}, let $h\in \Lambda_2$ and let $f=f_h$ be the solution \eq{33} to the Stein equation
\ben{\label{26}
f'(w)-wf(w)=h(w)-\mathcal{N}h.
} 
We have
\besn{\label{4}
& \Big| \E h(W) -\N h +\frac{\beta}{2} \N f''    \Big|\\
\leq & C\Big[ |\beta| \mathcal{W}_2(\mathcal{L}(W), N(0,1)) +\gamma_1+\gamma_2+\gamma_3    \Big],
}
where $\beta$, $\gamma_1$--$\gamma_3$ are as in Theorem \ref{t1}.
\end{proposition}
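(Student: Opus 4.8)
The plan is to run Stein's method. By the Stein equation \eqref{26}, $\E h(W)-\N h=\E f'(W)-\E[Wf(W)]$, so it suffices to expand $\E[Wf(W)]$ and show that $\E[Wf(W)]-\E f'(W)=\tfrac{\beta}{2}\N f''+\bigo(|\beta|\mathcal{W}_2(\mathcal{L}(W),N(0,1))+\gamma_1+\gamma_2+\gamma_3)$. Throughout I write $\eta_i=\sum_{j\in A_i}X_j$, $\eta_{ij}=\sum_{k\in A_{ij}}X_k$ and $\eta_{ijk}=\sum_{l\in A_{ijk}}X_l$, and I use that $f''=f_h''$ is Lipschitz with a constant depending only on $\Lambda_2$ (Lemma \ref{l2}).

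First I would write $\E[Wf(W)]=\sum_{i}\E[X_i f(W)]$ and subtract the term $\E[X_i f(W-\eta_i)]=\E X_i\,\E f(W-\eta_i)=0$, valid by (LD1) and $\E X_i=0$, to get $\E[X_i f(W)]=\E[X_i(f(W)-f(W-\eta_i))]$. Taylor-expanding $f(W)-f(W-\eta_i)$ about $W-\eta_i$ to second order, with the cubic remainder bounded via the Lipschitz property of $f''$, splits each summand into a first-order part $\E[X_i\eta_i f'(W-\eta_i)]$, a second-order part $\tfrac12\E[X_i\eta_i^2 f''(W-\eta_i)]$, and a remainder that, after expanding $|X_i||\eta_i|^3$ into terms $\E|X_iX_jX_kX_l|$ with $j,k,l\in A_i\subseteq A_{ijk}$, sums to $\bigo(\gamma_1)$.

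The heart of the proof is to reduce every surviving second-derivative term to the common argument $f''(W)$ and every first-derivative term to $f'(W)$, pushing all argument-replacement errors into $\gamma_1,\gamma_2,\gamma_3$. In the second-order part I replace $f''(W-\eta_i)$ by $f''(W)$ (cost $\bigo(\gamma_1)$) and decouple $\E[X_iX_jX_k f''(W)]$ with (LD3): since $\{X_i,X_j,X_k\}\indep(W-\eta_{ijk})$, the covariance $\Cov(X_iX_jX_k,f''(W))$ equals $\E[X_iX_jX_k(f''(W)-f''(W-\eta_{ijk}))]-\E X_iX_jX_k\,\E(f''(W)-f''(W-\eta_{ijk}))$, whose two parts are $\bigo(\gamma_1)$ and $\bigo(\gamma_3)$. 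This yields $\tfrac12\big(\sum_i\sum_{j,k\in A_i}\E X_iX_jX_k\big)\E f''(W)+\bigo(\gamma_1+\gamma_3)$. In the first-order part I expand $f'(W-\eta_i)$ about $f'(W-\eta_{ij})$, which is independent of $X_iX_j$ by (LD2); the $\delta_{ij}:=\eta_{ij}-\eta_i$ term, decoupled as above, contributes $\big(\sum_i\sum_{j\in A_i}\sum_{k\in A_{ij}\setminus A_i}\E X_iX_jX_k\big)\E f''(W)+\bigo(\gamma_1+\gamma_3)$, while the factored leading term $\sum_i\sum_{j\in A_i}\E X_iX_j\,\E f'(W-\eta_{ij})$, after expanding $\E f'(W-\eta_{ij})$ about $\E f'(W)$ and using $\sum_i\sum_{j\in A_i}\E X_iX_j=\E W^2=1$, gives $\E f'(W)$ plus errors $\bigo(\gamma_2)$. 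Adding the two $\E f''(W)$ coefficients and splitting $A_{ij}=A_i\sqcup(A_{ij}\setminus A_i)$ produces exactly $\tfrac{\beta}{2}$; a single Lipschitz replacement $|\E f''(W)-\N f''|\leq C\mathcal{W}_2(\mathcal{L}(W),N(0,1))$ then converts $\tfrac{\beta}{2}\E f''(W)$ into $\tfrac{\beta}{2}\N f''$ at cost $\bigo(|\beta|\mathcal{W}_2)$, and substituting back into the Stein identity gives \eqref{4}.

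Two points will require care, and I expect them to be the main obstacle. The coefficient $\beta$ must emerge with its exact signed value and its factor $2$: this only happens if the genuinely quadratic contribution (summed over $j,k\in A_i$) and the cross contribution from the first-order expansion (summed over $k\in A_{ij}\setminus A_i$) are kept together rather than estimated separately. More importantly, the Wasserstein distance must enter only through $\beta$ and not through $\sum|\E X_iX_jX_k|$; this is why one must never decouple $f''$ from its weight until all contributions have been merged into the single term $\tfrac{\beta}{2}\E f''(W)$, so that one Lipschitz replacement suffices. The most delicate estimate is the linear correction $\sum_i\sum_{j\in A_i}\E X_iX_j\,\E[\eta_{ij}f''(W)]$ arising when $\E f'(W-\eta_{ij})$ is expanded about $\E f'(W)$: to bound it by $\gamma_2$ one decouples each single variable $X_k$, $k\in A_{ij}$, from $f''(W)$ through the \emph{triple}-neighborhood $A_{ijk}$---legitimate because $\{X_i,X_j,X_k\}\indep(W-\eta_{ijk})$ forces $X_k\indep(W-\eta_{ijk})$---rather than through $X_k$'s own neighborhood, which need not lie inside $A_{ijk}$.
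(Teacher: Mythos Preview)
Your proposal is correct and follows essentially the same route as the paper's proof: Stein identity, Taylor expansion of $f(W)-f(W-\eta_i)$ to second order with $O(\gamma_1)$ remainder, decoupling the first- and second-order pieces through the nested neighborhoods $A_{ij}$ and $A_{ijk}$ to produce $\E f'(W)$ plus $\tfrac{\beta}{2}\E f''(W)$ with errors $O(\gamma_1+\gamma_2+\gamma_3)$, and a single Lipschitz replacement $\E f''(W)\to\N f''$ at cost $O(|\beta|\mathcal{W}_2)$. The only cosmetic difference is that the paper tends to keep the argument at $W^{(ij)}$ or $W^{(ijk)}$ before passing to $W$, whereas you sometimes pass to $f''(W)$ first and then decouple; the estimates are the same either way, and your observation that the $\gamma_2$ bound requires decoupling $X_k$ through $A_{ijk}$ (via (LD3)) rather than through $A_k$ is exactly the point behind the paper's \eqref{20}.
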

\begin{proof}[Proof of Proposition \ref{p1}]
From $h\in \Lambda_2$ and Lemma \ref{l2}, we have
\ben{\label{8}
|f''(x)-f''(y)|\leq C|x-y|
}
for any $x, y\in \mathbb{R}$.
From \eq{26}, we have
\ben{\label{16}
\E h(W)-\N h =\E f'(W) -\E Wf(W).
}
For each index $i\in I$, let 
\be{
W^{(i)}=W-\sum_{j\in A_i} X_j.
}
By (LD1), $X_i$ is independent of $W^{(i)}$.
From $\E X_i=0$, Taylor's expansion and \eq{8}, we have
\besn{\label{9}
&\E Wf(W)=\sum_{i\in I} \E X_i f(W)=\sum_{i\in I} \E X_i [f(W)-f(W^{(i)})]\\
=& \sum_{i\in I} \sum_{j\in A_i} \E X_i X_j f'(W^{(i)}) +\frac{1}{2}\sum_{i\in I}\sum_{j,k\in A_i} \E X_i X_j X_k f''(W^{(i)})+O(\gamma_1),
}
We begin by dealing with the first term on the right-hand side of \eq{9}.
The second term will be dealt with similarly.
In (LD2), 
let 
\be{
W^{(ij)}=W-\sum_{k\in A_{ij}}X_k.
}
By the independence of $\{X_i, X_j\}$ and $W^{(ij)}$ and \eq{8},
we have
\bes{
& \E X_i X_j f'(W^{(i)})= \E X_i X_j \E f'(W^{(ij)}) + \E X_i X_j \big[f'(W^{(i)})-f'(W^{(ij)})\big]\\
=& \E X_i X_j \E f'(W) + \E X_i X_j \Big\{ \E\big[f'(W^{(ij)})-f'(W)\big]+  \big[f'(W^{(i)})-f'(W^{(ij)})\big]  \Big\}\\
=& \E X_i X_j \E f'(W) + \E X_i X_j \E \big[-\sum_{k\in A_{ij}} X_k f''(W^{(ij)}) +O(\sum_{k\in A_{ij}} |X_k|)^2      \big]\\
&+\E X_i X_j \big[ \sum_{k\in A_{ij}\backslash A_i} X_k f''(W^{(ij)}) +O(\sum_{k\in A_{ij}} |X_k| )^2    \big].\\
}
By the assumption that $\E W^2=\sum_{i\in I} \sum_{j\in A_i} \E X_i X_j=1$, we have
\be{
\sum_{i\in I}\sum_{j\in A_i} \E X_i X_j \E f'(W)=\E f'(W).
}
Therefore, 
\besn{\label{19}
&\sum_{i\in I}\sum_{j\in A_i} \E X_i X_j f'(W^{(i)})\\
=& \E f'(W) -\sum_{i\in I}\sum_{j\in A_i}\sum_{k\in A_{ij}} \E X_i X_j \E X_k f''(W^{ij})\\
&+ \sum_{i\in I}\sum_{j\in A_i}\sum_{k\in A_{ij}\backslash A_i} \E X_i X_j  X_k f''(W^{ij})+O(\gamma_1+\gamma_2).
}
In (LD3), 
let 
\be{
W^{(ijk)}=W-\sum_{l\in A_{ijk}}X_l.
}
By the independence of $\{X_i, X_j, X_k\}$ and $W^{(ijk)}$, $\E X_k=0$ and \eq{8},
we have
\besn{\label{20}
&\sum_{i\in I}\sum_{j\in A_i}\sum_{k\in A_{ij}} \E X_i X_j \E X_k f''(W^{ij})\\
=& \sum_{i\in I}\sum_{j\in A_i}\sum_{k\in A_{ij}} \E X_i X_j \E X_k \big[f''(W^{ij})-f''(W^{(ijk)}) \big]\\
=&O(\gamma_2).
}
Similarly,
\besn{\label{21}
&\sum_{i\in I}\sum_{j\in A_i}\sum_{k\in A_{ij}\backslash A_i} \E X_i X_j  X_k f''(W^{ij})\\
=&\sum_{i\in I}\sum_{j\in A_i}\sum_{k\in A_{ij}\backslash A_i} \E X_i X_j  X_k \E f''(W^{(ijk)}) \\
& +\sum_{i\in I}\sum_{j\in A_i}\sum_{k\in A_{ij}\backslash A_i} \E X_i X_j  X_k \big[ f''(W^{ij})   -f''(W^{(ijk)}) \big]\\
=& \sum_{i\in I}\sum_{j\in A_i}\sum_{k\in A_{ij}\backslash A_i} \E X_i X_j  X_k \E f''(W) +O(\gamma_1+\gamma_3)
}
Combining \eq{19}, \eq{20} and \eq{21}, we have
\besn{\label{23}
&\sum_{i\in I}\sum_{j\in A_i} \E X_i X_j f'(W^{(i)})\\
=&\E f'(W)+\sum_{i\in I}\sum_{j\in A_i}\sum_{k\in A_{ij}\backslash A_i} \E X_i X_j  X_k \E f''(W) +O(\gamma_1+\gamma_2+\gamma_3).
}
Similar arguments applied to the second term on the right-hand side of \eq{9} yield
\besn{\label{22}
&\frac{1}{2}\sum_{i\in I}\sum_{j,k\in A_i} \E X_i X_j X_k f''(W^{(i)})\\
=&\frac{1}{2}\sum_{i\in I}\sum_{j,k\in A_i} \E X_i X_j X_k \E f''(W) \\
&+ \frac{1}{2}\sum_{i\in I}\sum_{j,k\in A_i} \E X_i X_j X_k \Big\{ \E\big[ f''(W^{ijk})-f''(W)  \big] +   \big[f''(W^{(i)})-f''(W^{(ijk)})   \big]  \Big\}\\
=&\frac{1}{2}\sum_{i\in I}\sum_{j,k\in A_i} \E X_i X_j X_k \E f''(W) +O(\gamma_1+\gamma_3).
}
From \eq{16}, \eq{9}, \eq{23} and \eq{22}, we have
\besn{\label{25}
&\E h(W)-\N h=\E f'(W)-\E Wf(W)\\
=& -\sum_{i\in I}\sum_{j\in A_i}\sum_{k\in A_{ij}\backslash A_i} \E X_i X_j X_k  \E f''(W)- \frac{1}{2}\sum_{i\in I}\sum_{j,k\in A_i} \E X_i X_j X_k \E f''(W)\\
&+O(\gamma_1+\gamma_2+\gamma_3)\\
=&-\frac{\beta}{2} \E f''(W) + O(\gamma_1+\gamma_2+\gamma_3).
}
From \eq{8} and the equivalent definition of the Wasserstein-1 distance
\be{
\mathcal{W}_1(\mu, \nu)=\sup_{g\in \text{Lip}_1(\mathbb{R})} \Big| \int g d\mu -\int g d\nu \Big|,
}
we have 
\be{
\big| \E f''(W) -\N f'' \big|\leq C  \mathcal{W}_1 (\mathcal{L}(W), N(0,1))\leq C  \mathcal{W}_2 (\mathcal{L}(W), N(0,1)).
}
This proves \eq{4}.
\end{proof}

\subsubsection{$\mathcal{W}_2$ bound for approximating $\mathcal{L}(W)$ by the distribution of a sum of i.i.d.\ random variables}
Note that in proving Theorem \ref{t1}, we can assume that $|\beta|$ is smaller than an arbitrarily chosen constant $c_1>0$. 
If $\beta\ne 0$,
let $n=\lfloor c_2 \beta^{-2} \rfloor$ for a constant $c_2>0$ to be chosen.
Let $\{\xi_i: i=1,\dots, n\}$ be i.i.d.\ such that
\be{
\P(\xi_1=-\frac{3}{2})=\frac{3}{16}-\frac{\sqrt{n} \beta}{6},
} 
\be{
\P(\xi_1=-\frac{1}{2})=\frac{5}{16}+\frac{\sqrt{n} \beta}{2},
} 
\be{
\P(\xi_1=\frac{1}{2})=\frac{5}{16}-\frac{\sqrt{n} \beta}{2},
} 
\be{
\P(\xi_1=\frac{3}{2})=\frac{3}{16}+\frac{\sqrt{n} \beta}{6},
} 
where we choose $c_2$ to be small enough so that the above is indeed a probability distribution, and then choose $c_1$ to be small enough so that $n\geq 1$. By straightforward computation, we have
\be{
\E \xi_i=0,\  \E \xi_i^2=1,\  \E \xi_i^3=\sqrt{n} \beta,\  \E \xi_i^4\leq C.
}
Let $V_n=\frac{1}{\sqrt{n}}\sum_{i=1}^n \xi_i$.
Note that $\kappa_3(V_n)=\beta$, where $\kappa_r$ denotes the $r$th cumulant, and $\sum_{i=1}^n \frac{\E \xi_i^4}{n^2}\leq \frac{C}{n}\leq C \beta^2$.
The expansion in Theorem 1 of \cite{Ba86} implies
\ben{\label{5}
\Big|\E h(V_n)-\N h+\frac{\beta}{2} \N f''    \Big|\leq C \beta^2.
}
If $\beta=0$, let $V_n\sim N(0,1)$ and \eq{5} automatically holds.
From Lemma \ref{l1} and the expansions \eq{4} and \eq{5}, we have
\besn{\label{6}
&\mathcal{W}_2(\mathcal{L}(W), \mathcal{L}(V_n))\\
\leq &C \Big\{ \sup_{h\in \Lambda_2} \big[ \E h(W) -\E h (V_n)   \big] \Big\}^{\frac{1}{2}}\\
\leq &C\Big\{ |\beta|+\big[ |\beta| \mathcal{W}_2(\mathcal{L}(W), N(0,1))\big]^{\frac{1}{2}} +(\gamma_1+\gamma_2+\gamma_3)^{\frac{1}{2}}    \Big\}.
}

We remark that \cite{Ri09} used a Poisson-like approximation for $\mathcal{L}{(W)}$.
Approximating by sums of i.i.d.\ random variables enables us to potentially bound the Wasserstein-$p$ distance for any positive integer $p$.

\subsubsection{Triangle inequality and the final bound}
By Lemma \ref{l3},
\ben{\label{18}
\mathcal{W}_2(\mathcal{L}(V_n), N(0,1))\leq C \Big\{ \sum_{i=1}^n \frac{\E \xi_i^4}{n^2} \Big\}^{\frac{1}{2}}\leq C |\beta|.
}
Using the triangle inequality, \eq{6} and \eq{18}, we obtain
\bes{
&\mathcal{W}_2(\mathcal{L}(W), N(0,1))\\
\leq & \mathcal{W}_2(\mathcal{L}(W), \mathcal{L}(V_n)) +\mathcal{W}_2(\mathcal{L}(V_n), N(0,1))    \\
\leq & C\Big\{ |\beta|+\big[ |\beta| \mathcal{W}_2(\mathcal{L}(W), N(0,1))\big]^{\frac{1}{2}} +(\gamma_1+\gamma_2+\gamma_3)^{\frac{1}{2}}    \Big\}.
}
Finally, we use the inequality $\sqrt{ab}\leq \frac{1}{2\epsilon} a + \frac{\epsilon}{2} b$ with $a=|\beta|$ and $b=\mathcal{W}_2(\mathcal{L}(W), N(0,1))$, choose a sufficiently small $\epsilon$ and solve the recursive inequality for $\mathcal{W}_2(\mathcal{L}(W), N(0,1))$ to obtain the bound \eq{2}.


\subsection{Proof of Corollary \ref{cor1}}
For each $i=1,\dots, n$, let $A_i=\{j: |j-i|\leq m\}$.
For each $i=1,\dots, n$ and $j\in A_i$, let $A_{ij}=\{k: \min\{|k-j|, |k-i|\}\leq m\}$.\
For each $i=1,\dots, n$, $j\in A_i$ and $k\in A_{ij}$, let $A_{ijk}=\{l: \min\{|l-i|, |l-j|, |l-k|\}\leq m\}$.
By the $m$-dependence assumption, they satisfy the assumptions (LD1)--(LD3) for Theorem \ref{t1}.
For the first term in the definition of $\beta$ of Theorem \ref{t1}, we have
\bes{
&|\sum_{i=1}^n \sum_{j,k\in A_i} \E X_i X_j X_k|\\
\leq & C\sum_{i=1}^n \sum_{j,k\in A_i} (\E|X_i|^3+\E|X_j|^3+\E|X_k|^3)\\
\leq& Cm^2 \sum_{i=1}^n \E |X_i|^3,
}
where the last inequality is from the fact that each $i$ is counted at most $Cm^2$ times in the previous expression.
The second term of $\beta$ has the same upper bound.
Similarly, for $\gamma_1$, we have
\bes{
&\sum_{i\in I} \sum_{j\in A_i}\sum_{k\in A_{ij}}\sum_{l\in A_{ijk}} \E |X_i X_j X_k X_l|\\
\leq &C \sum_{i\in I} \sum_{j\in A_i}\sum_{k\in A_{ij}}\sum_{l\in A_{ijk}} (\E|X_i|^4+\E|X_j|^4+\E|X_k|^4+\E|X_l|^4)\\
\leq & Cm^3 \sum_{i=1}^n \E |X_i|^4,
}
and $\gamma_2$ and $\gamma_3$ have the same upper bound.
This proves the corollary.

\subsection{Proof of Theorem \ref{t3}}

Consider the index set 
\be{
I=\{i=(i_1,\dots, i_m): 1\leq i_1<\dots<i_m\leq n\}.
}
For each $i\in I$, let 
$\xi_i=\sigma_n^{-1} h(X_{i_1},\dots, X_{i_m}).$
Then $W_n=\sum_{i\in I} \xi_i$.
For each $i\in I$, let 
\be{
A_i=\{j\in I: i\cap j\ne \emptyset\}.
}
For each $i\in I$ and $j\in A_i$, let 
\be{
A_{ij}=\{k\in I: k\cap(i\cup j)\ne \emptyset\}.
}
For each $i\in I$, $j\in A_i$ and $k\in A_{ij}$, let 
\be{
A_{ijk}=\{l\in I: l\cap(i\cup j\cup k)\ne \emptyset\}.
}
Then they satisfy the conditions (LD1)--(LD3) of Theorem \ref{t1}.
Moreover, the sizes of the neighborhoods are all bounded by $C n^{m-1}$.
Note that by the non-degeneracy condition,
$\sigma_n^2\asymp n^{2m-1}.$
By Theorem \ref{t1}, we have
\bes{
&\mathcal{W}_2(\mathcal{L}(W_n), N(0,1))\\
\leq& C\Big\{ n^m (n^{m-1})^2\frac{\E |h(X_1,\dots, X_m)|^3}{\sigma_n^3} + \big[ n^m (n^{m-1})^3 \frac{\E (h(X_1,\dots, X_m))^4}{\sigma_n^4}   \big]^{1/2}  \Big\}\\
\leq & C/\sqrt{n}.
}

\subsection{Proof of Theorem \ref{t2}}
In this subsection, the constants $C$ are allowed to depend on the given fixed graph $G$.
Let the potential edges of $K(n,p)$ be denoted by $(e_1,\dots, e_{{n\choose 2}})$.
Let $v=v(G), e=e(G)$. 
In applying Theorem \ref{t1}, let $W=\sum_{i\in I} X_i$,
where the index set is
\be{
I=\Big\{ i=(i_1,\dots, i_e): 1\leq i_1<\dots <i_e\leq {n\choose 2}, G_i:=(e_{i_1}, \dots, e_{i_e}) \ \text{is a copy of $G$}    \Big\},
}
\be{
X_i=\sigma^{-1} \big(  Y_i  -p^e\big),\quad \sigma^2:=\Var(S),\quad Y_i=\Pi_{l=1}^e E_{i_l},
}
and $E_{i_l}$ is the indicator of the event that the edge $e_{i_l}$ is connected in $K(n,p)$.
It is known that (cf. (3.7) of \cite{BaKaRu89})
\be{
\sigma^2\geq C (1-p) n^{2v} p^{2e} \psi^{-1}.
}
For each $i\in I$, let 
\be{
A_i=\{j\in I: e(G_j\cap  G_i)\geq 1\}.
}
For each $i\in I$ and $j\in A_i$, let
\be{
A_{ij}=\{k\in I: e(G_k\cap (G_i\cup  G_j))\geq 1\}.
}
For each $i\in I$, $j\in A_i$ and $k\in A_{ij}$, let
\be{
A_{ijk}=\{l\in I:  e(G_l\cap ( G_i \cup  G_j\cup  G_k  ))\geq 1\},
}
Then these constructions satisfy (LD1)--(LD3) of Section 2.1.
Note that the $Y$'s are all increasing functions of the $E$'s.
By the arguments leading to (3.8) of \cite{BaKaRu89}, we have
\bes{
&\gamma:=\gamma_1+\gamma_2+\gamma_3\\
\leq & \Big\{ \frac{C}{\sigma^4} \sum_{i\in I}\sum_{j\in A_i}\sum_{k\in A_{ij}} \sum_{l\in A_{ijk}} \E(Y_i Y_j Y_k Y_l) \Big\} 
 \wedge  \Big\{ \frac{C}{\sigma^4}\sum_{i\in I}\sum_{j\in A_i}\sum_{k\in A_{ij}} \sum_{l\in A_{ijk}} \E(1-Y_i)   \Big\}.
}
For $\frac{1}{2}<p<1$, the latter term directly yields the estimate
\bes{
\gamma\leq &C\sigma^{-4} n^{v} n^{3(v-2)} (1-p)\\
\leq & C n^{4v-6} (1-p) [n^{2v-2} (1-p)]^{-2}\\
\leq & C n^{-2} (1-p)^{-1}.
}
Let $\cong$ denote graph homomorphism. 
For $0<p\leq \frac{1}{2}$, the former term gives
\bes{
\gamma\leq & C\sigma^{-4} \sum_{H\subset G\atop e(H)\geq 1} \sum_{i,j\in I\atop G_i\cap G_j\cong H} 
\sum_{K\subset (G_i\cup G_j)\atop e(K)\geq 1}\sum_{k\in I\atop G_k\cap(G_i \cup G_j)=K} \\
&\qquad \qquad  \Big\{ \sum_{L\subset (G_i\cup G_j\cup G_k)\atop e(L)\geq 1}\sum_{l\in I\atop G_l\cap(G_i \cup G_j\cup G_k)=L} 
p^{4e-e(H)-e(K)-e(L)} \Big\}\\
\leq &C\sigma^{-4} \sum_{H\subset G\atop e(H)\geq 1} \sum_{i,j\in I\atop G_i\cap G_j\cong H} 
\sum_{K\subset (G_i\cup G_j)\atop e(K)\geq 1}\sum_{k\in I\atop G_k\cap(G_i \cup G_j)=K}\\
&\qquad \qquad \Big\{ \sum_{L\subset (G_i\cup G_j\cup G_k)\atop L\subset G_m \ \text{for some}\  m,  e(L)\geq 1} 
n^{v-v(L)} p^{4e-e(H)-e(K)-e(L)} \Big\}\\
\leq & C\sigma^{-4} \psi^{-1} n^v p^e 
\sum_{H\subset G\atop e(H)\geq 1} \sum_{i,j\in I\atop G_i\cap G_j\cong H} 
\sum_{K\subset (G_i\cup G_j)\atop e(K)\geq 1}\sum_{k\in I\atop G_k\cap(G_i \cup G_j)=K}
p^{3e- e(H)-e(K)}\\
\leq & C\sigma^{-2} (\psi^{-1} n^v p^e)^2,
}
where in the last step, we used (3.10) of \cite{BaKaRu89}.
This gives 
\be{
\gamma\leq C\psi^{-1}.
}
In summary, we have proved that $\gamma^{1/2}$ is bounded by the right-hand side of \eq{1}.
By a similar and simpler argument which is essentially the same as (3.10) of \cite{BaKaRu89}, we also have that
$|\beta|$ is bounded by the right-hand side of \eq{1}.
Theorem \ref{t2} is now proved by invoking Theorem \ref{t1}.

\subsection{Supporting arguments for Conjecture \ref{con1}}

We follow the proof of Theorem \ref{t1}, obtain higher-order expansions and use a more careful choice of sum of i.i.d.\ random variables as an intermediate approximation. We first consider the case $p=3$. 

Let $h\in \Lambda_3$.
Let $f:=f_h$ in \eq{33} be the solution to
\be{
f'(w)-wf(w)=h(w)-\N h.
}
From $h\in \Lambda_3$ and Lemma \ref{l2}, 
\ben{\label{32}
|f^{(3)}(x)-f^{(3)}(x)|\leq C|x-y|.
}
We further let $g:=g_{f''}$, defined by replacing $h$ by $f''$ on the right-hand side of \eq{33}, be the solution to
\be{
g'(w)-wg(w)=f''(w)-\mathcal{N}f''.
}
From $\frac{1}{C}f''_1\in \Lambda_2$ and Lemma \ref{l2}, we have
\be{
|g''(x)-g''(y)|\leq C|x-y|.
}
Denote the third cumulant of $W$ by
\be{
\kappa_3:=\kappa_3(W)=\sum_{i\in I} \sum_{j,k\in A_i} \E X_i X_j X_k +2\sum_{i\in I} \sum_{j\in A_i}\sum_{k\in A_{ij}\backslash A_i} \E X_i X_j X_k,
}
which we denoted by $\beta$ before.
Denote the fourth cumulant of $W$ by $\kappa_4:=\kappa_4(W)$.
A tedious but similar expansion as for \eq{25} yields
\besn{\label{27}
&\E h(W)-\N h=\E f'(W)-\E Wf(W)\\
=&-\frac{\kappa_3}{2} \E f''(W)-\frac{\kappa_4}{6} \E f^{(3)}(W) + O(R_3).
}
Since $\frac{1}{C}f''_1\in \Lambda_2$, from \eq{4}, we have
\ben{\label{28}
|\E f''(W)-\mathcal{N} f''+\frac{\kappa_3}{2} \mathcal{N} g''| \leq C\big[ |\kappa_3| \mathcal{W}_3(\mathcal{L}(W), N(0,1))+R_2\big].
}
From \eq{32}, we have
\ben{\label{29}
\E f^{(3)}(W)-\mathcal{N} f^{(3)}=O(\mathcal{W}_3(\mathcal{L}(W), N(0,1))).
}
From \eq{27}--\eq{29} and $|\kappa_3|\leq CR_1$, $|\kappa_4|\leq C R_2$, we have
\besn{\label{30}
&\Big| \E h(W)-\mathcal{N}h +\frac{\kappa_3}{2} \N f'' +\frac{\kappa_4}{6} \N f^{(3)} - \frac{\kappa_3^2}{4} \N g''\Big|\\ 
\leq & C\Big[(R_1^2+R_2)\mathcal{W}_3(\mathcal{L}(W), N(0,1))+R_1 R_2+R_3      \Big].
}
Note that the above expansion reduces to that of (8) of \cite{Ba86} for sums of independent random variables.

Without loss of generality, assume that $R_3$ and $R_4$, hence $|\kappa_3|$ and $|\kappa_4|$ are smaller than an arbitrarily chosen constant $c_1>0$.
Otherwise, the bound \eq{24} is trivial for $p=3$ by choosing a large enough $C_3$.
If $\kappa_3\ne 0$ or $\kappa_4\ne 0$,
let 
\be{
n= \floor{ c_2\kappa_3^{-2}} \wedge \floor{c_2 |\kappa_4|^{-1}}
}
for a constant $c_2>0$ to be chosen. 
Let $\{\xi_i: i=1,\dots, n\}$ be i.i.d.\ such that
\be{
\P(\xi_1=-2)=\frac{1}{12}+\frac{-2\sqrt{n} \kappa_3+n\kappa_4}{24},
} 
\be{
\P(\xi_1=-1)=\frac{1}{6}+\frac{\sqrt{n} \kappa_3-n\kappa_4}{6},
} 
\be{
\P(\xi_1=0)=\frac{1}{2}+\frac{n\kappa_4}{4},
} 
\be{
\P(\xi_1=1)=\frac{1}{6}-\frac{\sqrt{n} \kappa_3+n\kappa_4}{6},
} 
\be{
\P(\xi_1=2)=\frac{1}{12}+\frac{2\sqrt{n} \kappa_3+n\kappa_4}{24},
} 
where we choose $c_2$ to be small enough so that the above is indeed a probability distribution, and then choose $c_1$ to be small enough so that $n\geq 1$. By straightforward computation, we have
\be{
\E \xi_1=0, \ \E \xi_2^2=1,\ \kappa_3(\xi_1)=\sqrt{n} \kappa_3,\ \kappa_4(\xi_1)=n \kappa_4,\ \E|\xi_1|^5\leq C.
}
Let $V_n=\frac{1}{\sqrt{n}}\sum_{i=1}^n \xi_i$.
The expansion in Theorem 1 of \cite{Ba86} implies
\ben{\label{31}
\Big| \E h(V_n)-\mathcal{N}h +\frac{\kappa_3}{2} \N f'' +\frac{\kappa_4}{6} \N f^{(3)} - \frac{\kappa_3^2}{4} \N g''   \Big|\leq \frac{C}{n^{3/2}}
\leq C(R_1^3+R_2^{3/2}).
}
If $\kappa_3=\kappa_4= 0$, let $V_n\sim N(0,1)$ and \eq{31} automatically holds.
The expansions \eq{30} and \eq{31} imply
\be{
|\E h(W)-\E h(V_n)|\leq C\big[  (R_1^2+R_2)\mathcal{W}_3(\mathcal{L}(W), N(0,1))+R_1^3+R_2^{3/2}+R_3   \big],
}
where we used Young's inequality $|ab|\leq C(|a|^3+|b|^{3/2})$.
As in the proof of Theorem \ref{t1}, we have
\bes{
&\mathcal{W}_3(\mathcal{L}(W), N(0,1))\\
\leq & \mathcal{W}_3(\mathcal{L}(W), \mathcal{L}(V_n))+ C(R_1+R_2^{1/2})\\
\leq & C(R_1+R_2^{1/2}+R_3^{1/3})+C(R_1+R_2^{1/2})^{2/3}(\mathcal{W}_3(\mathcal{L}(W), N(0,1)))^{1/3}\\
\leq &\frac{1}{2}\mathcal{W}_3(\mathcal{L}(W), N(0,1))+C(R_1+R_2^{1/2}+R_3^{1/3}).
}
This implies the conjectured result for $p=3$.

For the case $p\geq 4$ and $h\in \Lambda_p$, we start with the expansion
\bes{
&\E h(W)-\N h=\E f'(W)-\E Wf(W)\\
=&-\sum_{m=1}^{p-1}\frac{\kappa_{m+2}}{(m+1)!} \E f^{(m)}(W)+ O(R_p),
}
where $f=f_h$ in \eq{33} is the solution to \eq{7} and $\kappa_{m+2}:=\kappa_{m+2}(W)$ is the $(m+2)$th cumulant of $W$.
To see that the coefficients must be of the given form of the cumulants, take $f(w)=w^2, w^3, \dots$ in the expansion.
The constraint that any pair of $\E$'s must be separated by at least two $X_i$'s is from the assumption that $\E X_i=0$ for any $i\in I$.
The conjectured result should then follow by similar arguments as for the case $p=3$.

\section*{Acknowledgements}

The author would like to thank Michel Ledoux for introducing the problem and Jia-An Yan for comments on an earlier version of this paper.
This work was partially supported by Hong Kong RGC ECS 24301617, a CUHK direct grant and a CUHK start-up grant.

\end{document}